\newcommand{\si}[1]{#1}
\newcommand{\jo}[1]{}
\newcommand{\FINANCIAMENTO}{We would like to thank Ellen H. Fukuda (Kyoto University) and Paulo J.S. Silva (University of Campinas) for initial discussions on this topic. This work was supported by
	CEPID-CeMEAI (FAPESP 2013/07375-0),
	% Tematico/Regular:
	FAPESP (grants 2018/24293-0, 2017/18308-2,
	2017/17840-2, %mito
       % Bolsa Thiago:
	and 2017/12187-9),
	CNPq (grants
	% Pesquisa Nino:
	301888/2017-5,
	% Pesquisa/Universal Gabriel:
	303427/2018-3, and 404656/2018-8), and
	FONDECYT grant 1201982 and Basal Program CMM-AFB 170001, both from ANID (Chile).}
	\newtheorem{theorem}{Theorem}[section]
	\newtheorem{lemma}{Lemma}[section]
	\newtheorem{definition}{Definition}[section]
	\newtheorem{example}{Example}[section]
	\newtheorem{remark}{Remark}[section]
	\DeclareMathOperator{\inte}{int}
	\DeclareMathOperator{\bdp}{bd{^+}}
	\newcommand{\R}{\mathbb{R}}
	\newcommand{\N}{\mathbb{N}}
	\newcommand{\half}{\frac{1}{2}}
	\renewcommand{\S}{\mathbb{S}}
	\newcommand{\tr}{\textnormal{trace}}
	\renewcommand{\Im}{\textnormal{Im}}
	\newcommand{\lin}{\textnormal{lin}}
\begin{document}

\title{Naive constant rank-type constraint qualifications for multifold second-order cone programming and semidefinite programming\jo{\thanks{\FINANCIAMENTO}}}
\date{September 11, 2020}
\jo{\titlerunning{Naive constant rank-type CQs for multifold SOCPs and SDPs}}

\si{
\author{R. Andreani\footnote{Department of Applied Mathematics, University of Campinas, Campinas-SP, Brazil. Email: andreani@ime.unicamp.br} \and G. Haeser\footnote{Department of Applied Mathematics, University of S\~ao Paulo, S\~ao Paulo-SP, Brazil. Email: \{ghaeser,leokoto,thiagops\}@ime.usp.br}\and L.M. Mito\footnotemark[2] \and H. Ram\'{\i}rez\footnote{Departamento de Ingenier\'{\i}a Matem\'atica and Centro de Modelamiento Matem\'atico (CNRS UMI 2807), Universidad de Chile, Santiago, Chile. Email: hramirez@dim.uchile.cl}\and D.O. Santos\footnote{Institute of Science and Technology, Federal University of S\~ao Paulo, S\~ao Jos\'e dos Campos-SP, Brazil. Email: daiana@ime.usp.br} \and T.P. Silveira\footnotemark[2]}

\date{September 11, 2020}}

\jo{
\author{R. Andreani \and G. Haeser \and L.M. Mito\and \mbox{H. Ram\'{\i}rez} \and D.O. Santos \and \mbox{T.P. Silveira}}
\institute{R. Andreani \at
	Department of Applied Mathematics, University of Campinas, Campinas-SP, Brazil. \email{andreani@ime.unicamp.br}
           \and
%           E.H. Fukuda \at
%            Graduate School of Informatics, Kyoto University, Kyoto, Japan. \email{ellen@i.kyoto-u.ac.jp}
%            \and
            G. Haeser \Letter\and L.M. Mito \and T.P. Silveira\at
            Department of Applied Mathematics, University of S\~ao Paulo, S\~ao Paulo-SP, Brazil. \email{\{ghaeser,leokoto,thiagops\}@ime.usp.br}
            \and
            H. Ram\'{\i}rez \at
            Departamento de Ingenier\'{\i}a Matem\'atica and Centro de Modelamiento Matem\'atico (CNRS UMI 2807), Universidad de Chile, Santiago, Chile. \email{hramirez@dim.uchile.cl}
            \and
            D.O. Santos\at
            Institute of Science and Technology, Federal University of S\~ao Paulo, S\~ao Jos\'e dos Campos-SP, Brazil.
            \email{daiana@ime.usp.br}
            }
\date{Received: date / Accepted: date}
}

\maketitle

%%%%%%%%%%%%%%%%%%%%%%%%%%%%%%%%%%%%%%%%%%%%%%%%%%%%%%
%
%					abstract
%
%%%%%%%%%%%%%%%%%%%%%%%%%%%%%%%%%%%%%%%%%%%%%%%%%%%%%%
	
\abstract{The constant rank constraint qualification, introduced by Janin in 1984 for nonlinear programming, has been extensively used for sensitivity analysis, global convergence of first- and second-order algorithms, and for computing the derivative of the value function. In this paper we discuss naive extensions of constant rank-type constraint qualifications to second-order cone programming and semidefinite programming, which are based on the Approximate-Karush-Kuhn-Tucker necessary optimality condition and on the application of the reduction approach. Our definitions are strictly weaker than Robinson's constraint qualification, and an application to the global convergence of an augmented Lagrangian algorithm is obtained.
%
%Regarding previous attempts to extend constant rank constraint qualifications to second-order cone programming, 
% we show a counter-example to the definitions presented in [Y. Zhang, L. Zhang, New Constraint Qualifications and Optimality Conditions for Second Order Cone Programs. Set-Valued Var. Anal (2019) 27:693--712].}
\jo{\keywords{constraint qualifications \and optimality conditions \and second-order cone programming \and semidefinite programming \and global convergence}
\subclass{90C22 \and 90C46 \and 90C30}}
\si{\\[0.2cm] {\bf Keywords:} Constraint qualifications; Optimality conditions; Second-order cone programming; Semidefinite programming; Global convergence.}

%%%%%%%%%%%%%%%%%%%%%%%%%%%%%%%%%%%%%%%%%%%%%%%%%%%%%%
%
%					introduction
%
%%%%%%%%%%%%%%%%%%%%%%%%%%%%%%%%%%%%%%%%%%%%%%%%%%%%%%
\section{Introduction}

In this paper we investigate constraint qualifications (CQs) for second-order cone programming and semidefinite programming. In particular, we are interested in constant rank CQs as defined first in \cite{crcq} and later extended in \cite{rcpld,cpg,minch,cpld} in the context of nonlinear programming. In particular, the definition in \cite{crcq} gained some notoriety for its ability to compute the derivative of the value function, a result known to hold at the time only under Mangasarian-Fromovitz CQ \cite{robinson}. Also, the definition from \cite{crcq} includes naturally the case of linear constraints, which does not follow under Mangasarian-Fromovitz CQ. 
The ability to handle redundant constraints (in particular, linear ones) in the case of nonlinear programming is a powerful modeling tool that frees the model builder from the apprehension of including them without preprocessing.
Actually, the effort of finding which constraints are redundant may be equivalent to the effort of solving the problem.

%\jo{In this journal, a}\si{A} first attempt for defining constant rank CQs for second-order cone programming has been published in \cite{ZZ}. However, here we show a counter-example that invalidates all the results established therein.

%Our investigation started in noting that a previous attempt

%Upon further investigation, we acknowledge that all of our attempts to define a suitable extension of constant rank CQs to the context of second-order cone programming have failed.
%
For conic programming, it is well known that linearity of the constraints is not a CQ \cite{dualsocp,dualsdp} and this somehow stresses the difficulties in extending these ideas to the conic context. In particular, a previous tentative extension to second-order cones \cite{ZZ} has been shown to be incorrect \cite{errata}.

In this paper, we make use of the reduction approach in order to propose new constant rank-type CQs for second-order cone programming and semidefinite programming that are strictly weaker than Robinson's CQ. In our approach, we separate the constraints into two sets: one consisting of the constraints that can be completely characterized by standard equality and inequality nonlinear programming constraints, and other with the irreducible conic constraints. For second-order cone programming, the second block consists of constraints that are active at the vertex of a multi-dimensional second-order cone, while for semidefinite programming these correspond to semidefinite blocks where the zero eigenvalue is non-simple. 

We consider our conditions to be naive extensions of the corresponding nonlinear programming CQ in the sense that if the problem only has irreducible constraints then all our conditions coincide with Robinson's CQ; however we show some interesting examples where our condition holds while Robinson's CQ fails. Extending these ideas to consider also the irreducible constraints is an ongoing topic of research.

%For second-order cones, the constraints on the cone boundary can be reformulated as standard nonlinear programming constraints using the reduction approach. Therefore, a second-order cone programming problem can be thought of as a combination of standard equality and nonlinear programming inequalities together with conic constraints that hold in a solution at the vertex of a second-order cone.
%
Despite our inability of dealing with the irreducible conic constraints, the Approximate-Karush-Kuhn-Tucker (AKKT) \cite{akkt} necessary optimality condition, recently extended to second-order cones \cite{psocp} and semidefinite programming \cite{AHV}, can easily be used to handle the remaining constraints by means of the reduction approach. This allows obtaining CQs analogous to those defined in \cite{rcpld,cpg,crcq,minch,cpld}. Analogous definitions of \cite{crcq,minch} are independent of Robinson's CQ, while analogues of \cite{rcpld,cpg,cpld} are strictly weaker than Robinson's CQ.

Since several algorithms are expected to generate AKKT sequences (this is the case, for instance, of the augmented Lagrangian algorithms of \cite{psocp} and \cite{AHV}), a relevant corollary of our analysis is that all CQs introduced in this paper can be used for proving global convergence of these algorithms to a KKT point.

This paper is organized as follows. In Section 2, we briefly introduce constant rank CQs for nonlinear programming. In Section 3, we revisit constraint qualifications for second-order cone programming. Section 4 is devoted to the AKKT approach, while in Section 5 we introduce and explain our new CQs for second-order cones. In Section 6 we extend these ideas to semidefinite programming. Finally, our conclusions are presented in Section 7.\\

{\bf Notation: }For a continuously differentiable function $g\colon\R^n\to\R^m$, we denote $J_g(x)$ the $m\times n$ Jacobian matrix of $g$ at $x$, for which the $j$-th row is given by the transposed gradient $\nabla g_j(x)^T$ of the $j$-th component function $g_j\colon\R^n\to\R, j=1,\dots,m$. Any finite-dimensional space $\R^m$ is equipped with its standard Euclidean inner product $\langle x, y \rangle:=x^T y = \sum_{j=1}^m x_j y_j$. Then, given a closed convex cone $K\subseteq\R^m$, we denote its polar by $K^\circ:=\{v\in\R^m\mid \langle v,y\rangle\leq0, \forall y\in K\}$.
Finally, we adopt the following standard conventions on the empty set $\emptyset$: the sum over  an empty index set is null (i.e., $\sum_\emptyset=0$) and  $\emptyset$  is linearly independent (considered as the basis of the trivial linear space $\{0\}$).

\section{Constant rank-type CQ conditions in nonlinear programming}

Consider the following nonlinear programming problem (NLP): 
%\begin{equation}\label{nlp}
%\begin{tabular}{ccc}
%Minimize & $f(x)$, & \\  
%s.t.  & $h_{i}(x) = 0$, &  $i = 1, 2, \ldots, m$ \\  
% & $g_{j}(x) \leq 0$, & $j=1, 2, \ldots, p$ \\ 
%\end{tabular} 
%\end{equation}
\begin{eqnarray}\label{nlp}
\nonumber\mbox{Minimize} & f(x),&\\ 
\mbox{s.t.} & h_i(x)=0, &i=1,\dots,p,\\
\nonumber&g_j(x)\leq0, &j=1,\dots,q,
\end{eqnarray}
where $f, h_{i}, g_{j}\colon \mathbb{R}^{n} \rightarrow \mathbb{R} $ are continuously differentiable functions. We denote by $A(x^*) := \{j \in \{1,\ldots, q\}\mid g_{j}(x^*) = 0\}$, the set of indices of active inequality constraints at a feasible point $x^*$.

It is well known that at a local minimizer $x^*$, it holds that $-\nabla f(x^*)\in\mathcal{T}(x^*)^\circ$, where $\mathcal{T}(x^*)$ denotes the (Bouligand) tangent cone to the feasible set at $x^*$ (see, e.g., \cite[Theorem 12.8]{nocedal}). However, since the tangent cone is a geometric object, this necessary optimality condition is not always easy to manipulate. For this reason, one considers the linearized cone, which is defined as follows:
$$
\mathcal{L}(x^*) := \left\{ d\in\R^n\mid
\nabla h_{i}(x^*)^{T}d = 0, i = 1,\ldots,p; \: \nabla g_{j}(x^*)^{T}d \leq 0, j \in A(x^*)\right\}.
$$
Its polar may be computed via Farkas' Lemma, obtaining:
$$
\mathcal{L}(x^*)^{\circ} = \left\{v \in \R^{n} \:\left| \:v = \sum_{i=1}^{p}\lambda_{i} \nabla h_{i}(x^*) + \sum_{j \in A(x^*)} \mu_{j} \nabla g_{j}(x^*), \mu_{j} \geq 0, j \in A(x^*) \right. \right\}.
$$
Hence, when $\mathcal{T}(x^*)^\circ=\mathcal{L}(x^*)^{\circ}$, this geometric optimality condition takes the form of the usual, much more tractable, Karush-Kuhn-Tucker conditions. Vectors $(\lambda_{i},\mu_{j})$ above are called Lagrange multipliers associated with $x^*$, and the set of all these vectors is denoted by $\Lambda(x^*)$ in this manuscript. 

A constraint qualification (CQ) is a condition that ensures the equality $\mathcal{T}(x^*)^\circ=\mathcal{L}(x^*)^{\circ}$. One of the most used CQ in the NLP literature is the well-known Linear Independence Constraint Qualification (LICQ), which states the linear independence of the set of gradients $\{\nabla h_{i}(x^*)\}_{i=1}^{p}\cup\{\nabla g_{j}(x^*)\}_{j \in A(x^*)}$. LICQ ensures not only the existence, but also the uniqueness of the Lagrange multiplier (see, e.g., \cite[Section 12.3]{nocedal}). 
Several weaker CQs have been defined for NLP. In this paper, we are interested in constant rank-type ones as first introduced by Janin in \cite{crcq}. Recall that in the NLP setting, we say that the Constant Rank Constraint Qualification (CRCQ) holds at a feasible point $x^*$ if there exists a neighborhood $V$ of $x^*$, such that for every subsets $\mathit{I} \subseteq \{1,\ldots,p\}$ and $\mathit{J} \subseteq A(x^*)$, the rank of $\{ \nabla h_{i}(x), \nabla g_{j}(x); i \in \mathit{I}, j \in \mathit{J}\}$ remains constant for all $x \in V$. CRCQ is clearly weaker than LICQ. 

Note that requiring only constant rank of the full set of gradients $\{\nabla h_{i}(x)\}_{i=1}^{p}\cup\{\nabla g_{j}(x)\}_{j \in A(x^*)}$ (which is known as the Weak Constant Rank (WCR) property) is not a CQ, as shown in \cite{ams2}. The necessity of considering every subset of this set of gradients may be seen from the definition of the linearized cone. Indeed, given $d\in\mathcal{L}(x^*)$, the relevant index set of inequality constraints gradients is given by $J=J_d:=\{j\in A(x^*)\mid\nabla g_j(x^*)^Td=0\}$, which cannot be chosen in advance if one only considers the point $x^*$. However,
this suggests that there is no need to consider subsets of indices for the equality constraints, that is,  it is enough to fix $I=\{1,\dots,p\}$. This condition, called Relaxed-CRCQ (RCRCQ), has been shown to be a CQ in~\cite{rcrcq}. This condition reads as follows: RCRCQ holds at a feasible point $x^*$ if there exists a neighborhood $V$ of $x^*$, such that for every subset  $\mathit{J} \subseteq A(x^*)$, the rank of $\{ \nabla h_{i}(x), \nabla g_{j}(x); i \in \{1,\ldots, p\}, j \in \mathit{J}\}$ remains constant for all $x \in V$.

These conditions can be seen as {\it constant linear dependence} conditions and thus it is natural to weaken these definitions by considering only {\it constant positive linear dependence}, providing conditions CPLD \cite{cpld} and its relaxed variant RCPLD \cite{rcpld}, both strictly weaker than Mangasarian-Formovitz CQ. This will be the most natural formulation for the CQs we propose in this paper. We refer the reader to \cite{rcpld}.

It turns out that the idea behind the construction of RCRCQ can be also extended to inequality constraints, providing an even weaker CQ. One seeks at characterizing a single index set $J$ which is relevant of having the constant rank property. This set consists of the indices of gradients defining the subspace component of $\mathcal{L}(x^*)^\circ$,  which is given by its lineality space.  More precisely, the lineality space of $\mathcal{L}(x^*)^\circ$, defined as the largest  linear space contained in  $\mathcal{L}(x^*)^\circ$,  is in this case given by $\mathcal{L}(x^*)^\circ\cap-\mathcal{L}(x^*)^\circ$. So, a gradient $\nabla g_{j}(x^*)$ belongs to $\mathcal{L}(x^*)^{\circ} \cap -\mathcal{L}(x^*)^{\circ}$ if, and only if, $-\nabla g_{j}(x^*) \in \mathcal{L}(x^*)^{\circ}$. Thus, for $J={J}_{-}(x^*) := \{j \in A(x^*)\mid -\nabla g_{j}(x^*) \in \mathcal{L}(x^*)^{\circ} \}$, we say that the Constant Rank of the Subspace Component (CRSC) CQ holds at a feasible point $x^*$ if there exists a neighborhood $V$ of $x^*$, such that the rank of $\{ \nabla h_{i}(x), \nabla g_{j}(x); i \in \{1,\ldots, p\}, j \in \mathit{J}_{-}(x^*)\}$ remains constant for all $x \in V$. It was proved in \cite{cpg} that CRSC is sufficient for the existence of Lagrange multipliers at a local minimizer, and this is the weakest of the CQs we have discussed.

CQ conditions discussed above in the NLP context have multiple applications. For instance, RCRCQ was used to compute the derivative of the value function in~\cite{minch},  as well as to prove the convergence of a second-order augmented Lagrangian algorithm to second-order stationary points in \cite{akkt2}. RCPLD and CRSC were shown to be sufficient for proving first-order global convergence of several algorithms while also implying the validity of an error bound property (cf. \cite{cpg}). Noteworthy, under CRSC, all inequality constraints in the set $J_{-}(x^*)$ behave locally as equality constraints, in the sense that they are active at any feasible point in a neighborhood of $x^*$.
Therefore, we strongly believe that the extension of these notions to a conic framework may have a major impact in stability and algorithmic theory for conic programming.

\section{Constraint qualifications conditions in second-order cone programming\label{sec:CQ-SOCP}}

%%%%%%%%%%%%%%%%%%%%%%%%%%%%%%%%%%%%%%%%%%%%%%%%%%%%%%
%
%					preliminary
%
%%%%%%%%%%%%%%%%%%%%%%%%%%%%%%%%%%%%%%%%%%%%%%%%%%%%%%
 
%\subsection{Preliminary}

Let us consider the second-order cone programming (SOCP) problem as follows:
\begin{eqnarray}\label{socp}
\nonumber\mbox{Minimize} & f(x),&\\ 
\mbox{s.t.} & h_i(x)=0, &i=1,\dots,p,\\
\nonumber&g_j(x)\in K_{m_j}, &j=1,\dots,\ell,
\end{eqnarray}
where the functions are continuously differentiable and the second-order cones are denoted by $K_{m_{j}}:=\{(z_0,\overline{z}) \in \R \times \R^{m_{j}-1}\mid z_0 \geq \|\overline{z}\| \}$ when $m_j>1$, and $K_{m_{j}} := \R_{+}$ (non-negative reals) otherwise. %We will denote by $\Omega := \{x \in \R^{n}| g(x) \in K\}$ the feasible set.
%We omit additional equality constraints only for simplicity of notation.

We say that the Karush-Kuhn-Tucker (KKT) conditions hold for problem (\ref{socp}) at a feasible point $x^*$ if there exists $\lambda\in\R^p$, $ \mu_j \in K_{m_j}$, $j=1,\dots,\ell,$ such that
\begin{eqnarray}\label{kkt1} 
\nabla_x L(x^*,\lambda, \mu ) = \nabla f(x^*) +J_h(x^*)^T\lambda -  \sum_{j=1}^\ell J_{g_j}(x^*)^T \mu_j =0 , \\
\langle \mu_j, g_j(x^*) \rangle = 0, \;\;\; j =1, \ldots, \ell. \label{kkt2}
\end{eqnarray}
Here, $L(x, \lambda, \mu) := f(x) + \langle \lambda, h(x) \rangle -  \sum_{j=1}^\ell \langle \mu_j, g_j(x) \rangle$ is the standard Lagrangian function for problem (\ref{socp}), and $\nabla_x L(x,\lambda, \mu )$ denotes the gradient of $L$ at $(x,\lambda, \mu )$ with respect to $x$. As usual, the set of all Lagrange multipliers $(\lambda, \mu )$ associated with the feasible point $x^*$, such that \eqref{kkt1}--\eqref{kkt2} are fulfilled, is denoted by $\Lambda(x^*)$.

As in NLP, one needs to assume a suitable CQ in order to ensure the existence of Lagrange multipliers associated with a local minimizer. In what follows, we recall the elements needed to define these CQs in the SOCP context.

The topological interior of $K_{m_{j}}$, denoted by $\inte(K_{m_{j}})$, and the non-zero boundary, denoted by $\bdp(K_{m_{j}})$, are respectively defined by
\begin{align*}
\inte (K_{m_{j}}) &:= \{(z_{0}, \overline{z}) \in \R \times \R^{m_{j}-1}\mid z_{0} > \|\overline{z}\|\},\\	
\bdp(K_{m_{j}}) &:= \{(z_{0}, \overline{z}) \in \R \times \R^{m_{j}-1}\mid z_{0} = \|\overline{z}\|> 0\}.	
\end{align*}
Thus, given a feasible point $x^*$, we introduce the index sets:
\begin{align*}
I_{int}(x^{*}) &:= \{j\ \in \{1,\ldots,\ell\} \mid g_j(x^{*}) \in \inte (K_{m_j})\},\\
I_B(x^{*}) &:=\{j \in \{1,\ldots,\ell\} \mid g_j(x^{*}) \in \bdp(K_{m_j})\},\\
I_0(x^{*}) &:= \{j \in \{1,\dots,\ell\} \mid g_j(x^*)=0\}.
\end{align*}

Moreover, the complementarity condition \eqref{kkt2} can be equivalently written as
\begin{equation} \label{kkt2circ}
 \mu_j\circ g_j(x^*) = 0, \;\;\; j =1, \ldots, \ell,
 \end{equation}
where the operation $\circ$ is defined for any couple of vectors $y := (y_0,\bar{y})$ and $s := (s_0,\bar{s})$, with the same dimension, as follows:
$$y\circ s := \left(
\begin{matrix}
\langle y , s \rangle \\  y_0 \bar s + s_0 \bar y
\end{matrix}
\right).
$$
%%%
For more details about this operation, its algebraic properties and its relation with Jordan algebras, see \cite[Section 4]{AG} and references therein. 

From \eqref{kkt2circ}, it is easy to check that complementarity condition is equivalently written in terms of the above-mentioned index sets as follows:
\begin{equation} \label{mu_complem}
 \mu_j= 0 \mbox{ if } j\in I_{int}(x^{*}), \quad  \mu_j= \alpha_j R_{m_j} g_j(x^*), \mbox{ for some  }\alpha_j \geq 0, \mbox{ if } j\in I_B(x^{*}) ,
 \end{equation}
and no condition on $\mu_j$ can be inferred when $j \in I_0(x^{*})$. Here, 
$R_{m}$ is an $m\times m$ diagonal matrix whose first entry is $1$ and the remaining ones are $-1$. 
\if{
Here, $J_{g_j}(x)$ denotes the $m_j\times n$ Jacobian of $g_j$ and $R_{m_j} = \left[ \begin{array}{cc}
1 & 0
^T \\
0 & - I_{m_{j-1}} 
\end{array}
\right]$, where $I_{m_{j}}$ is the $m_{j}$-dimensional identity matrix.
}\fi
Consequently, KKT conditions at $x^*$ can be characterized as the existence of $\lambda\in\R^p$, $ \mu_j \in K_{m_j}$, $j\in I_0(x^*)$, and $\alpha_j\geq0, j\in I_B(x^*)$, such that
\begin{eqnarray}\label{grakkt} 
%\nabla_x L(x^*,\lambda, \mu,\alpha) := 
\nabla f(x^*) +J_h(x^*)^T\lambda -  \sum_{j\in I_0(x^*)} J_{g_j}(x^*)^T \mu_j - \sum_{j\in I_B(x^*)}\alpha_j\nabla\phi_j(x^*) = 0,   
%\langle \mu_i, g_i(x^*) \rangle = 0 \;\;\; i =1, \ldots, \ell.
\end{eqnarray}
where
\[
\phi_j(x):=\half([g_j(x)]_0^2-\|\overline{g_j(x)}\|^2) \quad \mbox{for all } j\in I_B(x^*).
\]
Indeed, it is straightforward to check that $\nabla\phi_j(x)=J_{g_j}(x)^TR_{m_j}g_j(x)$ and multipliers $\mu_j$ for all $j\not\in  I_0(x^*)$ are recovered from \eqref{mu_complem}. 

 %We denote $\mu :=(\mu_i)_{i\in I_0(x^*)}$ and $\alpha:=(\alpha_j)_{j\in I_B(x^*)}$. For completeness we define $\mu_i=0$ when $i\in I_{int}(x^*)$ and $\mu_j=\alpha_jR_{m_j}g_j(x^*)\in\bdp(K_{m_j})\cup\{0\}$ for $j\in I_B(x^*)$.

%Let ${\cal J}g_i(x)$ be the Jacobian of $g_i(x)$. We define the following matrices
%\begin{center} 
%${\cal J}^{T}g_j(x^{*}), \quad j \in I_0(x^{*})$ 
%\end{center}
%and 
%\begin{center}
%${\cal J}^{T}g_j(x^{*})R_{m_{j}}g_j(x^{*}), \quad j \in I_B(x^*)$.
%\end{center}

The use of mappings $\phi_j$ is a consequence of applying the reduction approach to problem \eqref{socp}.  Actually, condition \eqref{grakkt} is simply KKT conditions at point $x^*$ for a locally equivalent version of problem \eqref{socp} for which constraints $g_j(x)\in K_{m_j}$ are replaced by $\phi_j(x)\geq 0$ when $j \in I_B(x^*)$,  and are omitted when $j \in I_{int}(x^*)$. For the sake of completeness, this reduced equivalent problem is explicitly stated here below:
\begin{eqnarray}\label{socp-reduced}
\nonumber\mbox{Minimize} & f(x),&\\ 
\mbox{s.t.} & h_i(x)=0, &i=1,\dots,p,\\
\nonumber&g_j(x)\in K_{m_j}, &j\in I_0(x^*),\\
\nonumber&\phi_j(x)\geq 0, &j \in I_B(x^*).
\end{eqnarray}

Despite its apparent simplicity in the SOCP setting, the reduction approach is a key tool in conic programming. It permits obtaining first- and second-order optimality conditions, to simplify some well-known CQs,  among other crucial properties.  See \cite[Section 3.4.4]{bonnans-shapiro} and \cite[Section 4]{BonRam} for more details. 
Throughout this article we will use KKT condition \eqref{grakkt} and problem \eqref{socp-reduced} to adapt CQ conditions from NLP to the SOCP setting \eqref{socp}. 

One of the most used (and strong) conditions to guarantee the existence of a Lagrange multiplier at a local minimizer $x^*$ is the nondegeneracy condition. Thanks to the reduction approach (cf. \cite[Equation 4.172]{bonnans-shapiro}), this condition can be equivalently defined as follows:
\begin{definition}
Let $x^*$ be a feasible point of \eqref{socp}. Consider all the row vectors of the matrices $J_h(x^*)$ and $J_{g_j}(x^*), j\in I_0(x^*)$ together with the row vectors $\nabla\phi_j(x^*)^T, j\in I_B(x^*)$. We say that \emph{nondegeneracy} holds at $x^*$ when these vectors are linearly independent.
\end{definition}

The nondegeneracy condition implies the existence and uniqueness of a Lagrange multiplier at a local minimizer $x^*$, and the reciprocal is true provided that $(x^*,\lambda,\mu)$ (with $(\lambda,\mu)\in\Lambda(x^*)$) is strictly complementary, that is, $g_j(x^*) +\mu_j \in  \inte (K_{m_j})$ for all $j=1,\dots,\ell$; see \cite[Proposition 4.75]{bonnans-shapiro}. Thus, nondegeneracy is the analogue of LICQ from nonlinear programming.
Note that there are other definitions of nondegeneracy e.g. \cite[Definition 18]{AG} and \cite[Definition 16]{BonRam}. However, all these definitions coincide in the case of SOCP problem \eqref{socp}. We address the reader to \cite[Section 4]{BonRam} for more details about nondegeneracy in the context of SOCP.

As LICQ in NLP, nondegeneracy condition is often considered too strong.  
For this reason, one typically assumes a weaker condition, called Robinson's CQ, 
which was originally defined in  \cite{Rob76} for a general conic setting. 
In our SOCP setting, we can use characterizations given in \cite[Proposition 2.97, Corollary 2.98 and Lemma 2.99]{bonnans-shapiro} to 
obtain the following equivalent definition: 
\begin{definition}Let $x^*$ be a feasible point of \eqref{socp}. We say that \emph{Robinson's CQ} holds at~$x^*$~if
\begin{equation}\label{eq:Rob-MF}
\begin{split}
J_h(x^*)^T\lambda +  \sum_{j=1}^\ell J_{g_j}(x^*)^T \mu_j =0 \mbox{ and }  \lambda\in\R^m, \, \mu_j \in K_{m_j}, \, \langle \mu_j, g_j(x^*)\rangle = 0,  j=1,\dots,\ell\\
\Rightarrow \:\: \lambda =0 \mbox{ and } \mu_j=0, \, j=1,\dots,\ell.
\end{split}
\end{equation}
\end{definition}

As in NLP, when $x^*$ is assumed to be a local solution of  \eqref{socp},
Robinson's CQ  \eqref{eq:Rob-MF} is equivalent to saying that the set of Lagrange multipliers $\Lambda(x^*)$ is nonempty and compact (cf. \cite[Props. 3.9 and 3.17]{bonnans-shapiro}). In this sense, condition \eqref{eq:Rob-MF} can be seen as an extension of Mangasarian-Fromovitz CQ in NLP to the SOCP setting \eqref{socp},  written in a dual form.

Thanks to \eqref{mu_complem}, condition  \eqref{eq:Rob-MF} can be rewritten as follows: %We say that Robinson's CQ holds at $x^*$ when % the following conic linearly independence holds:
\begin{equation}\label{eq:Rob}
\begin{split}
J_h(x^*)^T\lambda+\sum_{j\in I_0(x^*)}J_{g_j}(x^*)^T\mu_j + \sum_{j\in I_B(x^*)}\alpha_j\nabla \phi_j(x^*)=0,\\
\lambda\in\R^m, \mu_j\in K_{m_j}, j\in I_0(x^*); \: \alpha_j\geq0, j\in I_B(x^*)\\
\Rightarrow \:\: \lambda=0, \mu_j=0, j\in I_0(x^*); \: \alpha_j=0, j\in I_B(x^*).
\end{split}
\end{equation}
As we will see in the forthcoming sections, condition \eqref{eq:Rob} best fits our analysis. 

Note that \eqref{eq:Rob} can be interpreted as a conic linear independence of the (transposed) Jacobians and gradients involved in its definition. Indeed, given some finite number of convex and closed cones $C_j$ and denoting by $\prod_j C_j$ the cartesian product of these sets, we say that a correspondent set of matrices $V_j$ of appropriate dimensions is $\prod_j C_{j}$-linearly independent if
$$\sum_j V_j s_j = 0 \mbox{ and }  - s_j \in C_{j}^\circ  \mbox{ for all } j \:\:  \Rightarrow  \:\: s_j = 0  \mbox{ for all } j.$$
Then, \eqref{eq:Rob} coincides with the $\{0_p\}\times \prod_{j\in I_0(x^*)} K_{m_j}\times \R^{|I_B(x^*)|}_+$-linear independence of matrices: $J_h(x^*)^T$,  $J_{g_i}(x^*)^T$ with $ j\in I_0(x^*)$, and $\nabla \phi_j(x^*)$ with $j\in I_B(x^*)$. Here, $0_p$ denotes the null vector in $\R^p$. 
Moreover, when $C_{j}=\R_+$ for all $j$ in the definition above (and consequently, each matrix $V_j$ is simply a column vector), $\prod_j C_{j}$-linear independence coincides with the well-known positive linear independence. Then, condition~\eqref{eq:Rob} reminds the characterization of Mangasarian-Fromovitz CQ condition given by the positive linear independence of the gradients of active constraints (after replacing each equality constraint $h_i(x)=0$ by two inequalities $h_i(x)\geq 0$ and  $h_i(x)\leq 0$).  It is also interesting to note that $\{0_p\} \times \prod_{j=1,\dots,\ell} K_{m_j}$-linear independence of matrices $J_h(x^*)^T$  and $J_{g_i}(x^*)^T$ with $j=1,\dots,\ell$, is strictly stronger than Robinson's CQ \eqref{eq:Rob-MF}. This again shows how useful is the reduction approach for our analysis. 
Given the analyzed above, when Robinson's CQ fails, we say that the corresponding matrices in \eqref{eq:Rob} are conic linearly dependent. 

\section{The Approximate-KKT approach}

For the nonlinear programming problem \eqref{nlp}, the following \emph{Approximate-KKT} (AKKT) necessary optimality condition~\cite{akkt} is well known:

\begin{theorem}Let $x^*$ be a local minimizer of \eqref{nlp}. Then, there exist sequences $\{x^k\}\subset\R^n$, $\{\lambda^k\}\subset\R^p$, $\{\mu^k\}\subset\R^q_+$ such that $x^k\to x^*$ and 
\begin{equation}
\label{akkt-thm}
\nabla f(x^k)+\sum_{i=1}^p\lambda_i^k\nabla h_i(x^k)+\sum_{j\in A(x^*)}\mu_j^k\nabla g_j(x^k)\to0.
\end{equation}
\end{theorem}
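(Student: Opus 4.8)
The plan is to use an external quadratic penalty together with a Tikhonov-type regularization, minimized over a small closed ball around $x^*$, and then read off the AKKT sequence from the first-order stationarity of the penalized subproblems.

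First, since $x^*$ is a local minimizer, fix $\delta>0$ such that $f(x^*)\leq f(x)$ for every feasible $x$ in the closed ball $\overline{B}:=\{x\in\R^n\mid\|x-x^*\|\leq\delta\}$. For each $k\in\N$, define
\[
F_k(x):=f(x)+\frac{k}{2}\left(\sum_{i=1}^p h_i(x)^2+\sum_{j=1}^q\max\{0,g_j(x)\}^2\right)+\half\|x-x^*\|^2 ,
\]
and let $x^k$ be a global minimizer of $F_k$ over $\overline{B}$, which exists by continuity of $F_k$ and compactness of $\overline{B}$.

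Second, I would show that $\seq{x^k}$ converges to $x^*$. Since $f$ is continuous on the compact set $\overline{B}$ it is bounded below there, and from $F_k(x^k)\leq F_k(x^*)=f(x^*)$ together with nonnegativity of the penalty term one gets both $f(x^k)+\half\|x^k-x^*\|^2\leq f(x^*)$ for all $k$ and $\sum_{i=1}^p h_i(x^k)^2+\sum_{j=1}^q\max\{0,g_j(x^k)\}^2\to0$. Hence any limit point $\bar x$ of $\seq{x^k}$ lies in $\overline{B}$, is feasible, and satisfies $f(\bar x)+\half\|\bar x-x^*\|^2\leq f(x^*)$; local minimality of $x^*$ then forces $\bar x=x^*$, and since $\seq{x^k}\subset\overline{B}$ is bounded with $x^*$ as its only limit point, the whole sequence converges to $x^*$.

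Third, because $x^k\to x^*$, for all $k$ large enough $x^k$ lies in the interior of $\overline{B}$, so $\nabla F_k(x^k)=0$, that is,
\[
\nabla f(x^k)+\sum_{i=1}^p\bigl(k\,h_i(x^k)\bigr)\nabla h_i(x^k)+\sum_{j=1}^q\bigl(k\max\{0,g_j(x^k)\}\bigr)\nabla g_j(x^k)=-(x^k-x^*).
\]
Setting $\lambda_i^k:=k\,h_i(x^k)$ and $\mu_j^k:=k\max\{0,g_j(x^k)\}\geq0$, we have $\mu^k\in\R^q_+$; and for $j\notin A(x^*)$ we have $g_j(x^*)<0$, so by continuity $g_j(x^k)<0$ and $\mu_j^k=0$ for $k$ large, whence the sum over $j=1,\dots,q$ collapses to the sum over $j\in A(x^*)$. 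Since $-(x^k-x^*)\to0$, this is exactly \eqref{akkt-thm}. The main obstacle is the convergence $x^k\to x^*$: the argument must genuinely exploit both the merely local nature of the minimizer — which is why we restrict the subproblems to $\overline{B}$ — and the regularization term $\half\|x-x^*\|^2$, which is precisely what rules out feasible limit points other than $x^*$; a plain penalty would only produce a feasible limit point with no larger objective value, not necessarily $x^*$ itself.
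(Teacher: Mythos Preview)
Your proof is correct and complete: the quadratic penalty plus Tikhonov regularization over a small closed ball is precisely the standard argument for the AKKT condition, and you have handled the key step (forcing $\bar x=x^*$ via the regularization term) and the collapse of inactive indices correctly.

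Note, however, that the paper does not give its own proof of this theorem; it is stated as a well-known result with a citation to~\cite{akkt}. Your argument is in fact the classical one from that reference, so there is nothing substantive to compare.
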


We define $\mu_j^k\to0$ (or, equivalently, $\mu_j^k=0$) for $j\not\in A(x^*)$. Note that this does not require any constraint qualification at all and the sequence of approximate Lagrange multipliers $\{(\lambda^k,\mu^k)\}$ may be unbounded. If the sequence has a bounded subsequence, one may take a convergent subsequence such that the KKT conditions hold. In the unbounded case, one may define $M^k:=\max\{|\lambda_i^k|, i=1,\dots,p; \mu_j^k, j\in A(x^*)\}\to+\infty$ and divide the expression in \eqref{akkt-thm} by $M^k$. Thus, one may take an appropriate subsequence such that 
$$\frac{\lambda^k}{M^k}\to\lambda\in\R^p \quad \mbox{ and }\quad \frac{\mu_j^k}{M^k}\to\mu_j\geq0, \: j\in A(x^*),$$
obtaining the existence of  scalars $\lambda_i, i=1,\dots,p; \mu_j\geq 0,\, j\in A(x^*)$, not all equal to zero, satisfying
$$\sum_{i=1}^p\lambda_i\nabla h_i(x^*)+\sum_{j\in A(x^*)}\mu_j\nabla g_j(x^*)=0.$$
That is, the gradients of equality constraints and active inequality constraints are positive linearly dependent. This provides a simple proof for the existence of Lagrange multipliers under the Mangasarian-Fromovitz CQ (MFCQ). A very similar argument shows that the set of Lagrange multipliers at $x^*$ is bounded if, and only if, MFCQ holds.

\if{
In the case of the second-order cone programming \eqref{socp}, an extension of the AKKT necessary optimality condition has been proved in \cite{psocp} as follows:

\begin{theorem}Let $x^*$ be a local minimizer of \eqref{socp}. Then, there exist sequences $\{x^k\}\subset\R^n$, $\{\lambda^k\}\subset\R^m$, $\{\mu_i^k\}\subset K_{m_i}, i\in I_0(x^*)$, $\{\alpha_j^k\}\subset\R_+, j\in I_B(x^*)$ such that $x^k\to x^*$ and 
\begin{equation}
\label{akkt-socp}
\nabla f(x^k)+J_h(x^k)^T\lambda^k-\sum_{i\in I_0(x^*)}J_{g_i}(x^k)^T\mu_i^k -\sum_{j\in I_B(x^*)}\alpha_j^k\nabla \phi_j(x^k)\to0.
\end{equation}
\end{theorem}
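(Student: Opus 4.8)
\emph{Sketch of proof.} The plan is to argue by external penalization, applied to the \emph{reduced} problem \eqref{socp-reduced} rather than to \eqref{socp} directly. First I would fix $\delta>0$ small enough that: (i) $x^*$ minimizes $f$ over the feasible set of \eqref{socp} intersected with $\overline B:=\{x\in\R^n:\|x-x^*\|\le\delta\}$; (ii) $g_j(x)\in\inte(K_{m_j})$ for all $x\in\overline B$ and all $j\in I_{int}(x^*)$; and (iii) $[g_j(x)]_0>0$ for all $x\in\overline B$ and all $j\in I_B(x^*)$. These are possible by continuity of the $g_j$'s, feasibility of $x^*$, and the definitions of $I_{int}(x^*),I_B(x^*)$. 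Under (iii), for $x\in\overline B$ and $j\in I_B(x^*)$ one has $g_j(x)\in K_{m_j}\iff [g_j(x)]_0\ge\|\overline{g_j(x)}\|\iff\phi_j(x)\ge0$, so on $\overline B$ the $j$-th conic constraint ($j\in I_B(x^*)$) coincides with the smooth scalar inequality $\phi_j(x)\ge0$; and under (ii) the constraints indexed by $I_{int}(x^*)$ hold automatically on $\overline B$.

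Given a sequence $\rho_k\to+\infty$, I would then set
\[
\Phi_k(x):=f(x)+\half\|x-x^*\|^2+\frac{\rho_k}{2}\left(\sum_{i=1}^p h_i(x)^2+\sum_{j\in I_0(x^*)}\dist\!\big(g_j(x),K_{m_j}\big)^2+\sum_{j\in I_B(x^*)}\min\{0,\phi_j(x)\}^2\right),
\]
and let $x^k$ be a global minimizer of $\Phi_k$ on $\overline B$ (which exists, $\Phi_k$ being continuous on a compact set). From $\Phi_k(x^k)\le\Phi_k(x^*)=f(x^*)$ and boundedness of $f$ below on $\overline B$, the penalty term stays bounded, whence (since $\rho_k\to\infty$) the bracketed sum in the definition of $\Phi_k$ tends to $0$. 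Hence any accumulation point $\bar x\in\overline B$ of $\{x^k\}$ is feasible for \eqref{socp-reduced}, and therefore — by (ii)--(iii) — feasible for \eqref{socp}; passing to the limit in $f(x^k)+\half\|x^k-x^*\|^2\le f(x^*)$ gives $f(\bar x)+\half\|\bar x-x^*\|^2\le f(x^*)\le f(\bar x)$ by (i), so $\bar x=x^*$. Since every accumulation point equals $x^*$ and $\{x^k\}\subset\overline B$ is bounded, $x^k\to x^*$; in particular $x^k$ lies in the interior of $\overline B$ for $k$ large, so it is an unconstrained local minimizer of the $C^1$ map $\Phi_k$, and $\nabla\Phi_k(x^k)=0$.

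It remains to read off the multipliers from $\nabla\Phi_k(x^k)=0$, using: (a) Moreau's identity $\nabla\big[\half\dist(\cdot,K)^2\big]=\mathrm{Id}-P_K$ for a nonempty closed convex set $K$ (with $P_K$ the Euclidean projection), so by the chain rule $\nabla\big[\half\dist(g_j(x),K_{m_j})^2\big]=J_{g_j}(x)^T\big(g_j(x)-P_{K_{m_j}}(g_j(x))\big)$; (b) the Moreau decomposition $y-P_K(y)\in K^\circ$, which for the self-dual cone $K_{m_j}$ reads $P_{K_{m_j}}(y)-y\in K_{m_j}$; (c) $\nabla\phi_j(x)=J_{g_j}(x)^TR_{m_j}g_j(x)$ (as observed after \eqref{grakkt}) and $\tfrac{d}{dt}\min\{0,t\}^2=2\min\{0,t\}$. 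Putting $\lambda_i^k:=\rho_k h_i(x^k)$, $\mu_j^k:=\rho_k\big(P_{K_{m_j}}(g_j(x^k))-g_j(x^k)\big)\in K_{m_j}$ for $j\in I_0(x^*)$, and $\alpha_j^k:=-\rho_k\min\{0,\phi_j(x^k)\}=\rho_k\max\{0,-\phi_j(x^k)\}\ge0$ for $j\in I_B(x^*)$, the identity $\nabla\Phi_k(x^k)=0$ rearranges to
\[
\nabla f(x^k)+J_h(x^k)^T\lambda^k-\sum_{j\in I_0(x^*)}J_{g_j}(x^k)^T\mu_j^k-\sum_{j\in I_B(x^*)}\alpha_j^k\nabla\phi_j(x^k)=-(x^k-x^*)\to0,
\]
which is the asserted condition, with $\lambda^k\in\R^p$, $\mu_j^k\in K_{m_j}$, and $\alpha_j^k\ge0$.

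The step I expect to be the main obstacle is the choice to penalize the \emph{reduced} constraints from the start. A more direct penalization using $\dist(g_j(x),K_{m_j})^2$ uniformly for all $j$ would, for $j\in I_B(x^*)$, produce multipliers $\mu_j^k\in K_{m_j}$ complementary to $P_{K_{m_j}}(g_j(x^k))$, and rewriting these terms in the form $\alpha_j^k\nabla\phi_j(x^k)$ leaves a residual of order $\rho_k\,\dist(g_j(x^k),K_{m_j})^2$ — a quantity one can only guarantee to be \emph{bounded}, not to vanish, along the sequence. Penalizing $\phi_j(x)\ge0$ directly removes this difficulty, at the cost of the routine local-equivalence check in (iii); the treatment of $I_{int}(x^*)$ is likewise routine (even if those constraints were retained, their multipliers would vanish for $k$ large since $P_{K_{m_j}}(g_j(x^k))\in\inte(K_{m_j})$).
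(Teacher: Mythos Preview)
The paper does not prove this theorem; it simply states it and attributes it to \cite{psocp}. So there is no in-paper proof to compare against, and your task was really to supply one.

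Your argument is correct. The external-penalization scheme with the Tikhonov term $\tfrac12\|x-x^*\|^2$, minimization over a compact ball, and extraction of multipliers from $\nabla\Phi_k(x^k)=0$ is exactly the standard mechanism behind AKKT-type results, and each step you outline goes through: the local-equivalence check in (iii) is valid since $[g_j(x)]_0>0$ makes $g_j(x)\in K_{m_j}$ equivalent to $\phi_j(x)\ge0$; the differentiability of $\Phi_k$ is fine because $t\mapsto\min\{0,t\}^2$ is $C^1$ and $\dist(\cdot,K_{m_j})^2$ is $C^1$ with the gradient you wrote; and the Moreau decomposition indeed gives $P_{K_{m_j}}(y)-y\in K_{m_j}$ by self-duality, so $\mu_j^k\in K_{m_j}$.

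Your decision to penalize the \emph{reduced} constraints for $j\in I_B(x^*)$ from the outset is the right call, and your final paragraph correctly identifies why: penalizing $\dist(g_j(\cdot),K_{m_j})^2$ uniformly would force you to convert $J_{g_j}(x^k)^T\mu_j^k$ into $\alpha_j^k\nabla\phi_j(x^k)$ ex post, and the residual you describe need not vanish. The reference \cite{psocp} proves the AKKT condition for \eqref{socp} in the original (non-reduced) multiplier format $\mu_j^k\in K_{m_j}$ with approximate complementarity $\mu_j^k\circ g_j(x^k)\to0$; the present paper then recasts that into the $\alpha_j^k\nabla\phi_j(x^k)$ form needed here. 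Your route bypasses that two-step conversion by building the reduction into the penalty, which is both cleaner and self-contained. (Minor remark: the statement writes $\lambda^k\in\R^m$, but with $h\colon\R^n\to\R^p$ this should be $\R^p$, as you have it.)
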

}\fi

In order to go beyond MFCQ in nonlinear programming, one relies on the well-known \emph{Carath\'eodory's Lemma}, as stated in \cite{rcrcq}:

\begin{lemma}\label{lemma:carath} Let $v_1,\dots,v_{p+q}\in\mathbb{R}^n$ be such that $\{v_i\}_{i=1}^p$ are linearly independent. Consider scalars $\beta_i, i=1,\dots,p+q$, and denote $ y: = \sum_{i=1}^{p+q} \beta_{i} v_{i}$. Then,
there exist $J \subseteq  \{p+1, \ldots, p+q \}$ and scalars $\hat{\beta}_{i}, i \in \{1,\dots,p\}\cup J$, such that $ \{v_{i} \}_{i \in \{1,\dots,p\}\cup J} $ are linearly independent, $\beta_{i}>0$ implies $\hat{\beta}_i>0$, for all $i\in J$, and $y = \sum_{i \in \{1,\dots,p\}\cup J} \hat{\beta}_{i} v_{i}$.\end{lemma}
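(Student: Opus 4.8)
The plan is to argue by induction on $q$, the number of vectors sitting beyond the linearly independent block $\{v_i\}_{i=1}^{p}$. The base case $q=0$ is immediate: take $J=\emptyset$ and $\hat\beta_i=\beta_i$. For the inductive step, suppose $q\geq 1$ and the statement holds for all smaller values. If $\{v_i\}_{i=1}^{p+q}$ is already linearly independent we are done with $J=\{p+1,\dots,p+q\}$ and $\hat\beta_i=\beta_i$. If some $i>p$ has $\beta_i=0$, we may delete that index (the corresponding term is null) and apply the inductive hypothesis directly, since the remaining coefficients are unchanged. Hence the only genuine case is when $\{v_i\}_{i=1}^{p+q}$ is linearly dependent and $\beta_i\neq 0$ for every $i>p$.

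In that case, fix a nontrivial relation $\sum_{i=1}^{p+q}\gamma_i v_i=0$; because $\{v_i\}_{i=1}^{p}$ is linearly independent, the set $\Gamma:=\{i>p\mid \gamma_i\neq 0\}$ is nonempty. For every $t\in\mathbb{R}$ we have $y=\sum_{i=1}^{p+q}(\beta_i-t\gamma_i)v_i$, so the idea is to choose $t$ that simultaneously (i) annihilates at least one coefficient with index in $\{p+1,\dots,p+q\}$, and (ii) keeps $\beta_i-t\gamma_i\geq 0$ for every $i>p$ with $\beta_i>0$. Put $t_i:=\beta_i/\gamma_i\neq 0$ for $i\in\Gamma$, and let $t^{+}:=\min\{t_i\mid i\in\Gamma,\,t_i>0\}$ and $t^{-}:=\max\{t_i\mid i\in\Gamma,\,t_i<0\}$ (each $t_i$ is nonzero, so at least one of these two sets is nonempty). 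I claim $t^{*}:=t^{+}$ works when it exists, and $t^{*}:=t^{-}$ works otherwise. Property (i) is clear, since $t^{*}=t_{i^{*}}$ for some $i^{*}\in\Gamma$. For (ii), take $i>p$ with $\beta_i>0$: if $\gamma_i=0$ then $\beta_i-t^{*}\gamma_i=\beta_i>0$; if $\gamma_i\neq 0$, a short sign analysis—splitting on the sign of $\gamma_i$ and using the minimality of $t^{+}$ (or maximality of $t^{-}$)—yields $\beta_i-t^{*}\gamma_i\geq 0$.

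With such a $t^{*}$, set $K:=\{i>p\mid \beta_i-t^{*}\gamma_i=0\}\neq\emptyset$ and discard those indices. We obtain $y=\sum_{i\in\{1,\dots,p\}\cup(\{p+1,\dots,p+q\}\setminus K)}(\beta_i-t^{*}\gamma_i)v_i$, a representation with strictly fewer than $q$ vectors beyond the independent block, whose first $p$ vectors are still independent, and whose new coefficients $\beta_i':=\beta_i-t^{*}\gamma_i$ satisfy $\beta_i>0\Rightarrow\beta_i'>0$ for each retained $i>p$ (by (ii) together with $i\notin K$). Applying the inductive hypothesis to this reduced instance produces $J$ and $\hat\beta_i$ with $\{v_i\}_{i\in\{1,\dots,p\}\cup J}$ linearly independent, $y=\sum_{i\in\{1,\dots,p\}\cup J}\hat\beta_i v_i$, and $\beta_i'>0\Rightarrow\hat\beta_i>0$ for $i\in J$; composing the two sign implications $\beta_i>0\Rightarrow\beta_i'>0\Rightarrow\hat\beta_i>0$ completes the step.

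The main obstacle is precisely the correct choice of the shift parameter $t^{*}$: one must verify that the value which first drives some coefficient of the chosen dependency to zero never forces a coefficient that started out strictly positive to become negative—this is exactly what the $\min/\max$ definitions of $t^{\pm}$ guarantee, and the split $t^{+}$ vs.\ $t^{-}$ handles the case where no admissible positive shift exists. Everything else is routine bookkeeping, and the strict decrease of $q$ at each reduction ensures the recursion terminates.
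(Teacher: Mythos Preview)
The paper does not prove this lemma; it merely states it and attributes it to the cited reference. Your argument is correct and is the standard Carath\'eodory-type reduction: the shift parameter $t^{*}$ chosen as the smallest positive ratio $\beta_i/\gamma_i$ (or the largest negative one when no positive ratio exists) is precisely what guarantees that at least one coefficient with index $i>p$ is annihilated while every originally positive one remains nonnegative, hence strictly positive after the zeroed indices are discarded. The induction and the composition of sign implications $\beta_i>0\Rightarrow\beta_i'>0\Rightarrow\hat\beta_i>0$ at the end are routine, so nothing is missing.
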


\medskip

Thus, in order to prove that CRCQ (and its weaker variants) is a CQ for the nonlinear programming problem \eqref{nlp}, we apply Carath\'eodory's Lemma to \eqref{akkt-thm}. This yields
$$
\nabla f(x^k)+\sum_{i\in I^k}\tilde{\lambda}_i^k\nabla h_i(x^k)+\sum_{j\in J^k}\tilde{\mu}_j^k\nabla g_j(x^k)\to0,
$$
with $I^k\subseteq\{1,\dots,p\}$, $J^k\subseteq A(x^*)$, $\tilde{\mu}_j^k\geq0, j\in J^k$,  and such that the vectors of the set $\{\nabla h_i(x^k)\}_{i\in I^k}\cup\{\nabla g_j(x^k)\}_{j\in J^k}$ are linearly independent for all $k$. Here, by the infinite pigeonhole principle and passing to a subsequence if necessary, index subsets $I^k$ and $J^k$ can be taken  as fixed and not depending on $k$. 
Then, the AKKT approach described above is similarly followed.
It is worth to emphasize here that the application of Carath\'eodory's Lemma preserves the sign of the candidate to multipliers, that is, $\tilde{\mu}_j^k$ has the same sign than $\mu_j^k$. This is a crucial step which is not clearly extended to the conic case (see \cite{errata}). Note that if $\{\nabla h_i(x^k)\}_{i=1}^p$ is linearly independent for all $k$, we may take $I_k=\{1,\dots,p\}$, which will be relevant in our analysis.

%\if{
In the sequel, we will use the extension of the AKKT necessary optimality condition for second-order cone programming \eqref{socp}, as presented in \cite{psocp}:

\begin{theorem}Let $x^*$ be a local minimizer of \eqref{socp}. Then, there exist sequences $\{x^k\}\subset\R^n$, $\{\lambda^k\}\subset\R^p$, $\{\mu_j^k\}\subset K_{m_j}, j\in I_0(x^*)$, $\{\alpha_j^k\}\subset\R_+, j\in I_B(x^*)$ such that $x^k\to x^*$ and 
\begin{equation}
\label{akkt-socp}
\nabla f(x^k)+J_h(x^k)^T\lambda^k-\sum_{j\in I_0(x^*)}J_{g_j}(x^k)^T\mu_j^k -\sum_{j\in I_B(x^*)}\alpha_j^k\nabla \phi_j(x^k)\to0.
\end{equation}
\end{theorem}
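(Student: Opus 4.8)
The plan is to reduce the SOCP problem \eqref{socp} to a locally equivalent nonlinear programming problem of the form \eqref{socp-reduced}, and then invoke the classical AKKT theorem for NLP together with the reduction approach. The key observation is that, near a local minimizer $x^*$, the constraints $g_j(x)\in K_{m_j}$ with $j\in I_{int}(x^*)$ are inactive (by continuity $g_j(x)\in\inte(K_{m_j})$ for $x$ near $x^*$), so they may be discarded, while for $j\in I_B(x^*)$ the constraint $g_j(x)\in K_{m_j}$ is locally equivalent to the single scalar inequality $\phi_j(x)\ge 0$. Indeed, since $[g_j(x^*)]_0=\|\overline{g_j(x^*)}\|>0$, by continuity $[g_j(x)]_0>0$ for $x$ in a neighborhood of $x^*$, and on that neighborhood $g_j(x)\in K_{m_j}\iff [g_j(x)]_0\ge\|\overline{g_j(x)}\|\iff [g_j(x)]_0^2\ge\|\overline{g_j(x)}\|^2\iff\phi_j(x)\ge 0$. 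The constraints $g_j(x)\in K_{m_j}$ with $j\in I_0(x^*)$ are kept untouched.

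Then I would proceed as follows. First, fix a neighborhood $U$ of $x^*$ on which the equivalences above hold simultaneously for all $j\in I_{int}(x^*)\cup I_B(x^*)$, and replace the minimization of $f$ over the feasible set of \eqref{socp} by the minimization over the feasible set of \eqref{socp-reduced} intersected with $U$; since $x^*$ is a local minimizer of the former, it is a local minimizer of the latter. Second, since \eqref{socp-reduced} still contains the genuinely conic constraints $g_j(x)\in K_{m_j}$, $j\in I_0(x^*)$, I would apply the AKKT optimality condition for SOCP already available in the literature — but to avoid circularity here it is cleaner to note that the AKKT condition for problems with abstract conic constraints (as in \cite{psocp}) applied to \eqref{socp-reduced} directly produces sequences $x^k\to x^*$, $\lambda^k\in\R^p$, $\mu_j^k\in K_{m_j}$ for $j\in I_0(x^*)$, and Lagrange-type multipliers $\alpha_j^k$ for the scalar constraints $\phi_j(x)\ge 0$, $j\in I_B(x^*)$, satisfying
\[
\nabla f(x^k)+J_h(x^k)^T\lambda^k-\sum_{j\in I_0(x^*)}J_{g_j}(x^k)^T\mu_j^k-\sum_{j\in I_B(x^*)}\alpha_j^k\nabla\phi_j(x^k)\to 0.
\]
Here the minus signs and the fact that $\mu_j^k\in K_{m_j}$ are just bookkeeping matching the Lagrangian sign convention in \eqref{kkt1}; the nonnegativity $\alpha_j^k\ge 0$ comes from the inequality constraint $\phi_j(x)\ge 0$ being written as $-\phi_j(x)\le 0$, which is exactly the AKKT sign for an inequality constraint. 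Third, I would recall from the discussion preceding \eqref{socp-reduced} that $\nabla\phi_j(x)=J_{g_j}(x)^T R_{m_j}g_j(x)$, which also makes explicit that the approximate multipliers for the original conic constraints indexed by $I_B(x^*)$ are recovered as $\mu_j^k=\alpha_j^k R_{m_j}g_j(x^k)\in K_{m_j}$ (for $x^k$ close enough to $x^*$), consistently with \eqref{mu_complem}.

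The main obstacle is ensuring that the AKKT condition one invokes for \eqref{socp-reduced} is legitimately available and does not already presuppose what we want — i.e., that the AKKT necessary optimality condition for conic problems holds with no constraint qualification. Since this is precisely the content of the extension proved in \cite{psocp}, the honest route is to invoke that result directly on the original problem \eqref{socp}: apply the sequential optimality condition of \cite{psocp} to \eqref{socp}, obtain approximate multipliers in $\prod_j K_{m_j}$ satisfying the perturbed stationarity and asymptotic complementarity, and then use the index-set decomposition $\{1,\dots,\ell\}=I_{int}(x^*)\cup I_B(x^*)\cup I_0(x^*)$ together with the complementarity characterization \eqref{mu_complem} (applied asymptotically: for large $k$ the approximate multiplier $\mu_j^k$ associated with $j\in I_{int}(x^*)$ vanishes and the one associated with $j\in I_B(x^*)$ has the form $\alpha_j^k R_{m_j}g_j(x^k)$) to rewrite the stationarity residual in exactly the form \eqref{akkt-socp}. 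The only delicate point is the asymptotic complementarity bookkeeping near the non-zero boundary; handling $I_{int}(x^*)$ is immediate by continuity, and the genuinely conic terms $j\in I_0(x^*)$ are simply carried along unchanged.
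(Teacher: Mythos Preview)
The paper does not give a proof of this statement; it simply quotes the result from \cite{psocp} (see the sentence immediately preceding the theorem: ``we will use the extension of the AKKT necessary optimality condition for second-order cone programming \eqref{socp}, as presented in \cite{psocp}''). So there is no proof in the paper to compare your proposal against.

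That said, your first route is the natural one and the circularity you worry about is not there: the locally equivalent problem \eqref{socp-reduced} is itself an instance of \eqref{socp} (the scalar constraints $\phi_j(x)\ge 0$ are one-dimensional second-order cone constraints, $K_1=\R_+$), so the general AKKT result of \cite{psocp} applies to it directly and yields exactly \eqref{akkt-socp}, with $\alpha_j^k\ge 0$ coming out as the one-dimensional cone multiplier. Your second route --- applying AKKT to the original problem \eqref{socp} and then ``asymptotically'' invoking \eqref{mu_complem} to force $\mu_j^k=\alpha_j^k R_{m_j}g_j(x^k)$ for $j\in I_B(x^*)$ --- has a genuine gap as written: the characterization \eqref{mu_complem} relies on \emph{exact} complementarity $\mu_j\circ g_j(x^*)=0$, and the asymptotic complementarity delivered by an AKKT sequence does not by itself make $\mu_j^k$ exactly a nonnegative multiple of $R_{m_j}g_j(x^k)$. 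Closing that gap would require an additional projection/residual argument; it is cleaner to take the first route.
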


%%%%%%%%%%%%%%%%%%%%%%%%%%%%%%%%%%%%%%%%%%%%%%%%%%%%%%
%
%					proposal
%
%%%%%%%%%%%%%%%%%%%%%%%%%%%%%%%%%%%%%%%%%%%%%%%%%%%%%%
\section{A proposal of constraint qualifications for second-order cones\label{sec:CQ4SOCP}}

Following the previous discussion, we present a ``naive'' formulation of constant rank constraint qualifications for the second-order cone programming problem \eqref{socp}. 

\begin{definition}\label{def:RCPLD} Let $x^*$ be a feasible point of problem \eqref{socp} and $I\subseteq\{1,\dots,p\}$ be such that $\{\nabla h_i(x^*)\}_{i\in I}$ is a basis of the linear space generated by vectors $\{\nabla h_i(x^*)\}_{i=1}^p$. We say that the \emph{ Relaxed Constant Positive Linear Dependence (RCPLD)} condition holds at~$x^*$ when, for all $J\subseteq I_B(x^*)$, there exists a neighborhood $V$ of $x^*$ such that:
\begin{itemize} 
\item $\{\nabla h_i(x)\}_{i=1}^p$ has constant rank for all $x$ in $V$;
\item if the system
\begin{eqnarray*}\sum_{i\in I}\lambda_i \nabla h_i(x^*)+\sum_{j\in I_0(x^*)}J_{g_j}(x^*)^T\mu_j+\sum_{j\in J}\alpha_j\nabla\phi_j(x^*)=0,\\
\lambda_i\in\R, i\in I; \:\: \mu_j\in K_{m_j}, j\in I_0(x^*); \:\: \alpha_j\geq0, j\in J,
\end{eqnarray*}
has a not all zero solution $(\lambda_i)_{i\in I}, (\mu_j)_{j\in I_0(x^*)}, (\alpha_j)_{j\in I_B(x^*)}$, then vectors $\{\nabla h_i(x)\}_{i\in I}\cup\{\nabla\phi_j(x)\}_{j\in J}$ are linearly dependent for all $x$ in $V$.
\end{itemize}
\end{definition}

\medskip

Note that Robinson's CQ implies RCPLD since it states the conic linear independence of the corresponding sets (and thus, for all its subsets) while RCPLD allows its conic linear dependence, as long as the linearly dependence is maintained for a reduced subset in a neighborhood.

The definition above takes into account our inability to relax Robinson's CQ for cones $K_{m_j}$ with $j\in I_0(x^*)$, as the linear dependence for $x$ near $x^*$ is required only for equalities and for constraints at the boundary. %
%\textcolor{blue}{$\Leftarrow$ Gabriel: this is a too strong statement. I'd say that we are simply unable to extend the CQs beyond Robinson's, that's why we call them naive.} 
%
Indeed, note that in the case when $I_B(x^*)=\emptyset$ and no equalities are considered (i.e., $p=0$), RCPLD coincides with  Robinson's CQ \eqref{eq:Rob-MF}. This is an immediate consequence of the adopted convention that states that the empty set is always a linear independent set. On the other hand, we are aware that Definition~\ref{def:RCPLD} is unnecessarily strong when $m_j=1$ for an index $j\in I_0(x^*)$. Indeed, in such case, the associated inequality $g_j(x)\in K_{m_j}$ corresponds to an inequality constraint of the form $g_j(x)\geq 0$, which is active at $x^*$. Hence, RCPLD definition can be slightly modified to take this situation into account as follows: define $A(x^*):=\{ j\in I_0(x^*) \mid m_j=1\}$, and remove those indices from $ I_0(x^*)$, that is,
define $\tilde I_0(x^*):=I_0(x^*)\setminus A(x^*)$. Indices in $A(x^*)$ can thus be treated similarly to those in $I_B(x^*)$. So, by defining $\phi_j(x):=g_j(x)$ when $j\in A(x^*)$, a slightly weaker version of RCPLD can be obtained by replacing $I_0(x^*)$ by $\tilde I_0(x^*)$ and  $I_B(x^*)$ by  $I_B(x^*)\cup A(x^*)$ in Definition \ref{def:RCPLD}. Since this modification has no consequence in the proof of Theorem \ref{thm:RCPLD}, we do not include it in its statement.

The point raised in the last paragraph explains why Definition \ref{def:RCPLD} is considered a ``naive" extension of a constant rank-type condition. Before proving that RCPLD is a CQ for problem \eqref{socp}, we make further observations related to this point.

\begin{remark}
a)   When we choose $J=\emptyset$  in Definition \ref{def:RCPLD}, we necessarily obtain that there is no non-zero solution $(\lambda_i,\mu_j)$, with $i\in I$ and $j\in I_0(x^*)$, to the system:
 $$\sum_{i\in I}\lambda_i \nabla h_i(x^*)+\sum_{j\in I_0(x^*)}J_{g_j}(x^*)^T\mu_j=0 \quad \mbox{and} \quad \lambda_i\in\R, i\in I; \quad \, \mu_j\in K_{m_j}, j\in I_0(x^*).$$
 This is equivalent to saying that Robinson's CQ holds at $x^*$ for the constrained set $\Gamma_0:=\{ x \mid h_i(x)=0, \, i \in I, \, g_j(x)\in K_{m_j}, \, j\in I_0(x^*)\}$. So, RCPLD ensures that Robinson's CQ is fulfilled at $x^*$ for the active set $\Gamma_0$. Actually, by using the   slight modification discussed above, we can exclude standard nonlinear constraints from  $I_0(x^*)$, and conclude that it only implies the weaker condition:  Robinson's CQ holds at $x^*$ for the constrained set $\tilde \Gamma_0:=\{ x \mid h_i(x)=0, \, i \in I, \, g_j(x)\in K_{m_j}, \, j\in I_0(x^*), \, m_j >1\}$.
  
  b) Consider the case when problem \eqref{socp} reduces to NLP \eqref{nlp}, that is, $\tilde{I}_0(x^*)=\emptyset$ and $I_B(x^*)=\emptyset$. Then, RCPLD in Definition \ref{def:RCPLD} reduces to the respective definition for nonlinear programming \cite{rcpld}. In particular, by enlarging the system to include $\alpha_j\in\R, j\in J$, instead of only considering $\alpha_j\geq0, j\in J$, the definition reduces to an equivalent characterization  (see \cite{rcpld}) of RCRCQ:  $\{\nabla h_i(x)\}_{i=1}^p$ has constant rank for $x$ around $x^*$ and for all $J\subseteq A(x^*)$, if the set $\{\nabla h_i(x^*)\}_{i\in I}\cup\{\nabla\phi_j(x^*)\}_{j\in J}$ is linearly dependent, then $\{\nabla h_i(x)\}_{i\in I}\cup\{\nabla\phi_j(x)\}_{j\in J}$ must remain linearly dependent for all $x$ in a neighborhood of $x^*$ (here, the set $I$ is fixed as in Definition \ref{def:RCPLD}). The latter also explains why RCPLD, given in Definition \ref{def:RCPLD}, is considered a constant rank-type condition for problem \eqref{socp}.
  
  c) Differently from the definition of nondegeneracy and Robinson's CQ, the choice of the reduction function $\phi(\cdot)$ gives rise to different constant rank conditions. For instance, one could formulate a similar, but different, condition by considering the alternative reduction function $\tilde{\phi}_j(x):=[g_j(x)]_0-\|\overline{g_j(x)}\|$ for $j\in I_B(x^*)$. This is a well-known fact for nonlinear programming, which establishes that when a constraint set satisfies CRCQ, it can be  rewritten in such a way that it fulfills Robinson's CQ \cite{shulu}.
 \end{remark}

\begin{theorem} \label{thm:RCPLD}
Let $x^*$ be a feasible point of problem \eqref{socp} satisfying  the AKKT condition~\eqref{akkt-socp} and RCPLD. Then, the KKT conditions hold at $x^*$. In particular, RCPLD is a constraint qualification.
\end{theorem}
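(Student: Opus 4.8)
The plan is to transcribe the AKKT argument recalled in Section~4, with one structural precaution dictated by the ``naive'' nature of Definition~\ref{def:RCPLD}: Carath\'eodory's Lemma~\ref{lemma:carath} cannot be applied to the conic multipliers $\mu_j^k\in K_{m_j}$, $j\in I_0(x^*)$ (this is exactly the sign‑preservation obstruction pointed out after Lemma~\ref{lemma:carath} and in \cite{errata}), so the block $\sum_{j\in I_0(x^*)}J_{g_j}(x^k)^T\mu_j^k$ is carried along untouched and the reduction is performed only on the scalar parts, namely $\{\nabla h_i(x^k)\}_{i\in I}$ (multipliers $\lambda_i^k\in\R$) and $\{\nabla\phi_j(x^k)\}_{j\in I_B(x^*)}$ (multipliers $\alpha_j^k\ge0$), the latter being genuine NLP‑type constraints produced by the reduction approach. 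I start from the sequences $\{x^k\},\{\lambda^k\},\{\mu_j^k\},\{\alpha_j^k\}$ given by \eqref{akkt-socp}.

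First I fix a common neighborhood $V$ of $x^*$ serving all subsets $J\subseteq I_B(x^*)$ in Definition~\ref{def:RCPLD} (a finite intersection), shrunk so that $\{\nabla h_i(x)\}_{i\in I}$ stays linearly independent on $V$. By the first bullet of RCPLD, $\{\nabla h_i(x)\}_{i=1}^{p}$ has constant rank $|I|$ on $V$, so for every $x^k\in V$ the family $\{\nabla h_i(x^k)\}_{i\in I}$ is a basis of $\operatorname{span}\{\nabla h_i(x^k)\}_{i=1}^{p}$; hence $J_h(x^k)^T\lambda^k=\sum_{i\in I}\tilde\lambda_i^k\nabla h_i(x^k)$ for suitable scalars. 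Discarding the indices $j$ with $\alpha_j^k=0$ and applying Lemma~\ref{lemma:carath}, with the linearly independent set $\{\nabla h_i(x^k)\}_{i\in I}$ in the role of the fixed vectors and the $-\nabla\phi_j(x^k)$ as the remaining ones, produces $J^k\subseteq I_B(x^*)$ and scalars $\hat\lambda_i^k$ ($i\in I$), $\hat\alpha_j^k>0$ ($j\in J^k$) with $\{\nabla h_i(x^k)\}_{i\in I}\cup\{\nabla\phi_j(x^k)\}_{j\in J^k}$ linearly independent and
\[
\nabla f(x^k)+\sum_{i\in I}\hat\lambda_i^k\nabla h_i(x^k)-\sum_{j\in I_0(x^*)}J_{g_j}(x^k)^T\mu_j^k-\sum_{j\in J^k}\hat\alpha_j^k\nabla\phi_j(x^k)\to 0.
\]
By the infinite pigeonhole principle I pass to a subsequence along which $J^k\equiv J$ is constant.

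Next I run the boundedness dichotomy on $M^k:=\max\bigl(\{|\hat\lambda_i^k|\}_{i\in I}\cup\{\|\mu_j^k\|\}_{j\in I_0(x^*)}\cup\{\hat\alpha_j^k\}_{j\in J}\bigr)$. If $\{M^k\}$ is bounded, a convergent subsequence yields limits $\hat\lambda_i$, $\mu_j\in K_{m_j}$ (the cones are closed) and $\hat\alpha_j\ge0$, and passing to the limit above gives exactly \eqref{grakkt} (take $\lambda_i=\hat\lambda_i$ on $I$ and $0$ elsewhere, $\alpha_j=\hat\alpha_j$ on $J$ and $0$ elsewhere; the complementarity \eqref{mu_complem} holds automatically since $g_j(x^*)=0$ on $I_0(x^*)$ and is built into the structure otherwise), so KKT holds at $x^*$. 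If instead $M^k\to+\infty$, dividing by $M^k$ and passing to a convergent subsequence — using $\nabla f(x^k)/M^k\to0$ — yields scalars $\bar\lambda_i$, $\bar\mu_j\in K_{m_j}$, $\bar\alpha_j\ge0$, not all zero (continuity of the $\max$ keeps its value equal to $1$), with
\[
\sum_{i\in I}\bar\lambda_i\nabla h_i(x^*)-\sum_{j\in I_0(x^*)}J_{g_j}(x^*)^T\bar\mu_j-\sum_{j\in J}\bar\alpha_j\nabla\phi_j(x^*)=0.
\]
Multiplying by $-1$, this is a nonzero solution of the system in Definition~\ref{def:RCPLD} for this very $J$, so RCPLD forces $\{\nabla h_i(x)\}_{i\in I}\cup\{\nabla\phi_j(x)\}_{j\in J}$ to be linearly dependent for all $x\in V$; evaluating at $x=x^k$ for $k$ large contradicts the linear independence obtained from Carath\'eodory's Lemma. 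Hence $\{M^k\}$ is bounded and KKT holds at $x^*$; since every local minimizer of \eqref{socp} satisfies \eqref{akkt-socp}, RCPLD is a constraint qualification.

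I expect the only delicate point to be the bookkeeping around Carath\'eodory's Lemma: isolating the conic block $\sum_{j\in I_0(x^*)}J_{g_j}(x^k)^T\mu_j^k$ so that the lemma touches only scalar coefficients, keeping the sign of the $\alpha_j^k$'s, and arranging that the reduced family $\{\nabla h_i(x^k)\}_{i\in I}\cup\{\nabla\phi_j(x^k)\}_{j\in J^k}$ is simultaneously linearly independent and — after pigeonholing — indexed by a fixed $J$; this is precisely what makes the final contradiction with RCPLD clean. Everything else is a faithful copy of the AKKT argument of Section~4.
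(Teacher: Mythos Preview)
Your proof is correct and follows essentially the same approach as the paper's: reduce the equality block to the basis $I$ via the constant-rank hypothesis, apply Carath\'eodory's Lemma only to the scalar multipliers $\{\hat\lambda_i^k\}_{i\in I}\cup\{\alpha_j^k\}_{j\in I_B(x^*)}$ while carrying the conic block $\sum_{j\in I_0(x^*)}J_{g_j}(x^k)^T\mu_j^k$ untouched, pigeonhole to fix $J$, and run the $M^k$ boundedness dichotomy so that the unbounded case yields a nonzero solution of the RCPLD system contradicting the linear independence secured at $x^k$. Your write-up is in fact slightly more explicit than the paper's about the neighborhood bookkeeping and the sign flip, but the structure and every key step coincide.
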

\begin{proof}
AKKT condition \eqref{akkt-socp} ensures the existence of  sequences $\{x^k\}\subset\R^n$, $\{\lambda^k\}\subset\R^p$, $\{\mu_j^k\}\subset K_{m_j}, j\in I_0(x^*)$, $\{\alpha_j^k\}\subset\R_+, j\in I_B(x^*)$, such that $x^k\to x^*$ and 
\begin{equation*}
\nabla f(x^k)+\sum_{i=1}^p\lambda_i^k\nabla h_i(x^k)-\sum_{j\in I_0(x^*)}J_{g_j}(x^k)^T\mu_j^k -\sum_{j\in I_B(x^*)}\alpha_j^k\nabla \phi_j(x^k)\to0.
\end{equation*}
By the constant rank assumption on the equality constraints, and the definition of $I$, we may rewrite $\sum_{i=1}^p\lambda_i^k\nabla h_i(x^k)=\sum_{i\in I}\tilde{\lambda}_i^k\nabla h_i(x^k)$ for new scalars $\tilde{\lambda}^k_i\in\R, i\in I$,  such that vectors $\{\nabla h_i(x^k)\}_{i\in I}$ are linearly independent. Applying Carath\'eodory's Lemma, for each $k$, we get $J^k\subseteq I_B(x^*)$ and new scalars $\hat{\lambda}^k_i\in\R, i\in I$, $\hat{\alpha}^k_j\geq0, j\in J^k$, such that
\begin{equation}
\label{limit}
\nabla f(x^k)+\sum_{i\in I}\hat{\lambda}_i^k\nabla h_i(x^k)-\sum_{j\in I_0(x^*)}J_{g_j}(x^k)^T\mu_j^k -\sum_{j\in J^k}\hat{\alpha}_j^k\nabla \phi_j(x^k)\to0,
\end{equation}
and vectors $\{\nabla h_i(x^k)\}_{i\in I}\cup\{\nabla\phi_j(x^k)\}_{j\in J^k}$ are linearly independent. By the infinite pigeonhole principle, without loss of generality we can consider subsequences, which are renamed as the original ones, for which sets $J^k$ are the same for all $k$. This set is denoted by $J$.

Define $M^k:=\max\{|\hat{\lambda}_i^k|, i\in I; \|\mu_i^k\|, i\in I_0(x^*); \hat{\alpha}_j, j\in J\}$. If $\{M^k\}$ is bounded, any accumulation point of $\{\hat{\lambda}_i^k, i\in I; \mu_i^k , i\in I_0(x^*); \hat{\alpha}_j, j\in J\}$ (after replacing by 0 the values for indices that are neither in $I$, nor in $J$) satisfies \eqref{grakkt}. Hence, $x^*$ is a KKT point of \eqref{socp}. Otherwise, we may take a subsequence such that $M^k\to+\infty$, and divide the expression in~\eqref{limit} by $M^k$, considering convergent subsequences such that
\begin{eqnarray*}
  & & -\frac{\hat{\lambda}_i^k}{M^k}\to\lambda_i\in\R, \: i\in I; \qquad \frac{\mu_j^k}{M^k}\to\mu_j\in K_{m_j}, \: j\in I_0(x^*); \\
  & & \frac{\hat{\alpha}_j^k}{M^k}\to\alpha_j\geq0, \: j\in J, \qquad \mbox{ with }(\lambda_i,\mu_j,\alpha_j)\neq0,
\end{eqnarray*}
and obtaining
$$\sum_{i\in I}{\lambda}_i\nabla h_i(x^*)+\sum_{j\in I_0(x^*)}J_{g_j}(x^*)^T\mu_j +\sum_{j\in J}{\alpha}_j\nabla \phi_j(x^*)=0.$$ 
Then, since vectors $\{\nabla h_i(x^k)\}_{i\in I}\cup\{\nabla\phi_j(x^k)\}_{j\in J}$ are linearly independent, this  contradicts the definition of RCPLD.\jo{\hfill\qed}
\end{proof}

 Exact definition of RCPLD in nonlinear programming can be consulted in  \cite{rcpld}. The definition of CRCQ \cite{crcq}, RCRCQ \cite{minch}, and CPLD \cite{cpld} may be analogously extended. They are omitted.  We only introduce the extension  of CRSC \cite{cpg} for this SOCP setting, since its definition is more involving and differs from its nonlinear programming counterpart. For the sake of completeness, the definition of CRSC considers sets $\tilde I_0(x^*)$ and $A(x^*)$. To prove that CRSC is a CQ is enough to follow the proof of Theorem \ref{thm:RCPLD}, so it is omitted.

\begin{definition}
Let $x^*$ be a feasible point of \eqref{socp} and $J_{-}(x^*)\subseteq I_B(x^*)\cup A(x^*)$ be defined~as 
\begin{equation*}
\begin{split}J_{-}(x^*):=\Bigg\{j_0\in I_B(x^*)\cup A(x^*) {\Big |} -\nabla\phi_{j_0}(x^*)=\sum_{i=1}^p\lambda_i\nabla h_i(x^*)+\hspace{-5pt}\sum_{j\in I_B(x^*)\cup A(x^*)}\hspace{-5pt}\alpha_j\nabla\phi_j(x^*),\\
\mbox{ for some } \lambda_i\in\R, \alpha_j\geq0 \Bigg\}.
\end{split}\end{equation*}
Set $J_{+}(x^*):=I_B(x^*)\cup A(x^*)\backslash J_{-}(x^*)$. We also define $I\subseteq\{1,\dots,p\}$ and $J\subseteq J_{-}(x^*)$ such that $\{\nabla h_i(x^*)\}_{i\in I}\cup\{\nabla\phi_j(x^*)\}_{j\in J}$ is a basis of the linear space generated by $\{\nabla h_i(x^*)\}_{i=1}^p\cup\{\nabla\phi_j(x^*)\}_{j\in J_{-}(x^*)}$. We say that the \emph{Constant Rank of the Subspace Component} (CRSC) condition holds at $x^*$ when there exists a neighborhood $V$ of $x^*$ such that:
\begin{itemize} 
\item $\{\nabla h_i(x)\}_{i=1}^p\cup\{\nabla\phi_j(x)\}_{j\in J_{-}(x^*)}$ has constant rank for all $x$ in $V$;
\item the system
\begin{eqnarray*}\sum_{i\in I}\nabla h_i(x^*)\lambda_i+\sum_{j\in \tilde I_0(x^*)}J_{g_j}(x^*)\mu_j+\sum_{j\in J\cup J_{+}(x^*)}\nabla\phi_j(x^*)\alpha_j=0,\\
\lambda_i\in\R, i\in I; \quad \mu_j\in K_{m_j}, j\in \tilde I_0(x^*); \quad \alpha_j\in\R, j\in J; \quad \alpha_j\geq0, j\in J_{+}(x^*),
\end{eqnarray*}
has only the trivial solution.% $\lambda_i=0, i\in I; \mu_j=0, j\in I_0(x^*); \alpha_j=0, j\in I_B(x^*)$.%, then $\{\nabla h_i(x)\}_{i\in I}\cup\{\nabla\phi_j(x)\}_{j\in J\cup J_{+}}$ is linearly dependent for all $x$ in $V$.
\end{itemize}
\end{definition}

\medskip

Note that when $\tilde{I}_0(x^*)=\emptyset$, the second requirement in the definition of CRSC always holds \cite{cpg}.

As said above, both definitions, RCPLD and CRSC, are ``naive'' in the sense that they do not improve on Robinson's CQ regarding multi-dimensional cones at zero. That is, when all constraint indices belong to $\tilde{I}_0(x^*)$, both definitions coincide with Robinson's CQ \eqref{eq:Rob-MF}. However, the example below shows that RCPLD and CRSC are strictly weaker than Robinson's CQ:

\begin{example}\label{ex:socp}
Consider the constraint set defined by
$$g(x):=(g_0(x),g_1(x)):=(x,x)\in K_2,$$
where $x$ is one-dimensional. Clearly, $x^*=1$ is feasible and the single constraint is in the boundary, i.e. $I_B(x^*)$ is the only nonempty index set. Reduced constraint is such that $\phi(x):=\half (g_0(x)^2-g_1(x)^2)=0$ for all $x$. Then, it follows that $\nabla\phi(x^*)=0$ and consequently, Robinson's CQ fails. However, $\nabla\phi(x)=0$ for all $x$, which implies that RCPLD holds. CRSC also holds by noting that the reduced constraint belongs to the index set $J_{-}(x^*)$, whose gradient has constant rank, and $\tilde{I}_0(x^*)=\emptyset$, which is sufficient for ensuring the second condition. Indeed, $J=\emptyset$ is a basis for the linear space generated by the constraint gradient in $J_{-}(x^*)$ and the result follows by the linear independence of the empty set.
\end{example}

%%%%%%%%%%%%%%%%%%%%%%%%%%%%%%%%%%%%%%%%%%%%%%%%%%%%%%
%
%					conclusion
%
%%%%%%%%%%%%%%%%%%%%%%%%%%%%%%%%%%%%%%%%%%%%%%%%%%%%%%

\section{Extension to semidefinite programming}

Consider the semidefinite programming (SDP) problem with multiple constraints:
\begin{eqnarray}\label{sdp}
\nonumber\mbox{Minimize} & f(x),&\\ 
\mbox{s.t.} & h(x)=0, &\\
\nonumber&g_j(x)\in \S^{m_j}_+, &j=1,\dots,\ell,
\end{eqnarray}
where $f:\R^n \to \R$, $h:\R^n\to \R^p,$ and  $g_j:\R^n \to \S^{m_j}$  are continuously differentiable functions, $\S^{m_j}$ is the linear space of $m_j\times m_j$ real symmetric matrices equipped with the inner product $A\cdot B := \tr(A B )$, where $\tr(AB)$ denotes the sum of the elements of the diagonal of $AB$ for all  matrices $A, B\in  \S^{m_j}$, and $$\S^{m_j}_+:=\{M\in \S^{m_j}\mid z^T M z\geq 0, \forall z\in \R^{m_j}\}$$ is the closed convex cone of all positive semidefinite elements of $\S^{m_j}$, for all $j=1,\ldots,\ell$. We denote by $\preceq_j$ the partial order relation induced by $\S^{m_j}_+$, that is, $A\preceq_j B$ if, and only if, $B-A\in \S^{m_j}_+$. For the sake of notation, the index $j$ is omitted throughout the paper and this relation order is simply denoted by $\preceq$. The order relations $\succeq$, $\succ$, and $\prec$ are similarly defined.

We end this subsection by recalling the Karush-Kuhn-Tucker conditions in the SDP framework. We say that KKT conditions hold at a feasible point $x^*$ of problem \eqref{sdp} when there exist Lagrange multipliers $\lambda\in \R^p$ and $\mu_j\in \S^{m_j}$, $j=1,\ldots,\ell$ such that
\begin{subequations}
\label{eq:KKT_P}
\begin{align}
\nabla f(x^*) +J_h(x^*)^T\lambda -  \sum_{j=1}^\ell J_{g_j}(x^*)^T \mu_j,\label{eq:KKT_P_1}\\
g_j(x^*)\cdot \mu_j = 0, \ j=1,\ldots,\ell,\label{eq:KKT_P_2}
\end{align}
\end{subequations}
with  
\begin{equation}\nonumber \label{def:adjoint}
J_{g_j}(x^*)^T z:=(\partial_1 g_j(x^*)\cdot z,\ldots,\partial_n g_j(x^*)\cdot z)^T, \quad \forall z\in \S^{m_j},
\end{equation}
where $\partial_i g_j(x^*)$ is the partial derivative of $g_j$ with respect to the variable $x_i$, at $x^*$, for each $i=1,\ldots,n$. In fact, $J_{g_j}(x^*)^T$ is the adjoint of the linear mapping $J_{g_j}(x^*)$, defined by $$J_{g_j}(x^*)d:=\sum_{i=1}^n d_i \partial_i g_j(x^*),$$ for all $d=(d_1,...,d_n)^T\in \R^n$, $j=1,\ldots,\ell$.

\subsection{Revisiting constraint qualifications for multifold SDP}

Constraint qualification conditions recalled in Section \ref{sec:CQ-SOCP} for SOCP have been also well established for SDP problem \eqref{sdp}. In this section, we start by quickly recalling Robinson's CQ, before proceeding with the study of nondegeneracy condition, which needs more attention for our purposes.

As in the SOCP setting, Robinson's CQ \cite{Rob76}  can be equivalently characterized via the properties established in  \cite[Proposition 2.97, Corollary 2.98 and Lemma 2.99]{bonnans-shapiro} in its dual form: 
\begin{definition}
We say that \emph{Robinson's CQ} holds at a feasible point $x^*$ of problem \eqref{sdp} when
\begin{equation}\label{def:RobinsonSDP}
\left.
\begin{aligned}
J_{h}(x^*)^T\lambda +\sum_{j=1}^\ell J_{g_j}(x^*)^T \mu_j=0,\\
\quad g_j(x^*)\cdot \mu_j=0, \ \forall j=1,\ldots,\ell,\\
\quad \mu_j\in \S^{m_j}_+, \ \forall j=1,\ldots,\ell, 
\end{aligned}
\right\}
\quad \Rightarrow \quad \mu_j=0, \ \forall j=1,\ldots,\ell.
\end{equation}
\end{definition}

\if{Robinson's CQ \eqref{def:RobinsonSDP} is known to be equivalent to the following condition (see e.g. \cite[Prop. 2.97]{bonnans-shapiro}):
\begin{equation}\label{eq:charact_Robinson}
\Im J_h(x^*)\times \prod_{j=1}^\ell \left(\Im J_{g_j}(x^*)+T_{\mathbb{S}^{m_j}_+} (g_j(x^*))\right) = \R^p \times \prod_{j=1}^\ell \mathbb{S}^{m_j},
\end{equation}
where $\prod$ denotes a Cartesian product. 
}\fi

As in SOCP, Robinson's CQ is considered as the natural extension of Mangasarian-Fromovitz CQ from NLP to the SDP setting. Actually, when $x^*$ is assumed to be a local solution of  \eqref{socp},
Robinson's CQ  \eqref{def:RobinsonSDP} is equivalent to saying that the set of Lagrange multipliers $\Lambda(x^*)$ is nonempty and compact (cf. \cite[Props. 3.9 and 3.17]{bonnans-shapiro}). 

 \if{
 
Thanks to \eqref{mu_complem}, condition  \eqref{eq:Rob-MF} can be rewritten as follows: %We say that Robinson's CQ holds at $x^*$ when % the following conic linearly independence holds:
\begin{equation}\label{eq:Rob}
\begin{split}
J_h(x^*)^T\lambda+\sum_{j\in I_0(x^*)}J_{g_j}(x^*)^T\mu_j + \sum_{j\in I_B(x^*)}\alpha_j\nabla \phi_j(x^*)=0,\\
\lambda\in\R^m, \mu_j\in K_{m_j}, j\in I_0(x^*); \: \alpha_j\geq0, j\in I_B(x^*)\\
\Rightarrow \:\: \lambda=0, \mu_j=0, j\in I_0(x^*); \: \alpha_j=0, j\in I_B(x^*).
\end{split}
\end{equation}
As we will see in the forthcoming sections, condition \eqref{eq:Rob} best fits our analysis. 

Note that \eqref{eq:Rob} can be interpreted as a conic linear independence of the (transposed) Jacobians and gradients involved in its definition. Indeed, given some finite number of convex and closed cones $C_j$ and denoting by $\prod_j C_j$ the cartesian product of these sets, we say that a correspondent set of matrices $V_j$ of appropriate dimensions is $\prod_j C_{j}$-linearly independent if
$$\sum_j V_j s_j = 0 \mbox{ and }  - s_j \in C_{j}^\circ  \mbox{ for all } j \:\:  \Rightarrow  \:\: s_j = 0  \mbox{ for all } j.$$
Then, \eqref{eq:Rob} coincides with the $\{0_p\}\times \prod_{j\in I_0(x^*)} K_{m_j}\times \R^{|I_B(x^*)|}_+$-linear independence of matrices: $J_h(x^*)^T$,  $J_{g_i}(x^*)^T$ with $ j\in I_0(x^*)$, and $\nabla \phi_j(x^*)$ with $j\in I_B(x^*)$. Here, $0_p$ denotes the null vector in $\R^p$.

XX

}\fi

Let us now recall nondegeneracy condition in the SDP context. The notion of nondegeneracy (called transversality therein) was introduced by Shapiro and Fan in \cite[Section 2]{ShapiroFan} by means of tangent spaces in the context of eigenvalue optimization. An equivalent form is proven in \cite[Equation (4.172)]{bonnans-shapiro} for  reducible cones. This is adopted as a formal definition in our multifold SDP setting:

\begin{definition} \label{def:NDG}
We say that a feasible point $x^*$ of problem \eqref{sdp} is \emph{nondegenerate} when the following relation is satisfied
\begin{equation}\label{eq:NDG}
\Im \, {\cal A}(x^*)  + \{0\} \times \prod_{j=1}^\ell \lin(T_{\mathbb{S}^{m_j}_+} (g_j(x^*))) = \R^p \times \prod_{j=1}^\ell \mathbb{S}^{m_j},
\end{equation}
where %${\cal A}(x^*)$ is the linear mapping defined by 
$$ {\cal A}(x^*) := 
\left( \begin{matrix} J_h(x^*)   \\ J_{g_j}(x^*) ; \,  j=1,..., \ell \end{matrix} \right)  $$
is a linear mapping from $ \R^n$ to $\R^p \times \prod_{j=1}^\ell \mathbb{S}^{m_j} $.
\end{definition}

As it happens in SOCP, the nondegeneracy condition is considered to be a natural analogue of LICQ from NLP to SDP. 
Actually, nondegeneracy condition \eqref{eq:NDG} implies the existence and uniqueness of a Lagrange multiplier at a local minimizer $x^*$, and the reciprocal is true provided that $(x^*,\lambda,\mu)$ (with $(\lambda,\mu)\in\Lambda(x^*)$) is strictly complementary, that is, $g_j(x^*) +\mu_j \succ 0$ for all $j=1,\dots,\ell$; see \cite[Proposition 4.75]{bonnans-shapiro}.
However, this analogy only makes sense when matrix blocks $g_j(x^*)$ are chosen in a ``minimal" way, in the sense of avoiding zeros in the off diagonal entries. In particular, an NLP problem with $\ell$ inequality constraints should be modeled as an instance of \eqref{sdp} with $m_1=\ldots=m_\ell=1$. Only in that case, nondegeneracy coincides LICQ. To stress the point above, we recall here below some results from \cite[Section 5]{BonRamSDP}. 

Consider the NLP problem of minimizing $f(x)$ under two constraints: $g_1(x)\geqslant 0$ and $g_2(x)\geqslant 0$, where $f, g_1$, and $g_2$ are smooth real-valued functions. Let $x^*$ be a local mimimun for which $g_1(x^*)=g_2(x^*)=0$ and LICQ holds (i.e., vectors $\nabla g_1(x^*)$ and $\nabla g_2(x^*)$ are linearly independent). Denote by $\bar{\mu}_1$ and $\bar{\mu}_2$ the unique associated Lagrange multipliers, and assume that strict complementarity holds: $\bar \mu_i >0$ for $i=1,2$. If this NLP problem is written as the following SDP problem 
\begin{eqnarray}\label{single-sdp}
\nonumber\mbox{Minimize} & f(x),\\ 
\mbox{s.t.} &
\begin{bmatrix}
g_1(x)&0\\
0& g_2(x)
\end{bmatrix}\in \S^{2}_+,
\end{eqnarray}
%where $g_j(x):\R^n\to \R$ and
%\begin{equation}\nonumber
%G(x):=
%\begin{bmatrix}
%g_1(x)& &\\
%& \ddots &\\
%& & g_\ell(x)
%\end{bmatrix},
%\end{equation}
%then nondegeneracy is not necessarily preserved. We illustrate this fact with the following particular case of problem \eqref{sdp} with $\ell=2$ and $m_1=m_2=1$:
%%
%\begin{equation}
%\label{pb:nonlinear}
%\min_{x\in \R^n}\{ f(x) \mid \  g_1(x)\geq 0, \  g_2(x)\geq 0\},
%\end{equation}
%\comm{Gabriel: Avoid introducing new notation that you are only gonna use once. This seems to be the case of Diag (I think you can simply remove the phrase here). Also, I think lin is undefined above.}
%
then nondegeneracy condition \eqref{eq:NDG} never holds. Indeed,
the Lagrange multiplier associated with $x^*$ for the reformulated problem \eqref{single-sdp} is never unique.
It is enough to note that the matrix 
$$
\bar \mu := 
\begin{bmatrix}
\bar\mu_1 & 0\\
0 & \bar\mu_2
\end{bmatrix}
$$
is an associated Lagrange multiplier as well as 
$$\bar{\mu} + t \left(\begin{matrix} 0 & 1 \\ 1 &0
\end{matrix}\right),$$
for any $t\in\R$ such that $t^2\leq {\bar{\mu}_1\bar{\mu}_2}$. Of course, this apparent inconsistency occurs not only for diagonal matrices but also for any SDP problem with a diagonal structure (see e.g. \cite[Lemma 5.1]{BonRamSDP}), and it is due to an inappropriate modeling decision regarding the sparse structure of the studied SDP problem.

%The above reasoning can be extended to show that the nondegeneracy property can also be lost when transforming \eqref{sdp} into an equivalent SDP problem with a single block-diagonal constraint, analogous to \eqref{single-sdp}. 

On the other hand, this phenomenon does not occur with Robinson's CQ, which is always preserved independently of the block structure of the SDP constraint set. This may be one of the reasons why multifold SDP is not often taken into consideration in the literature, along with the fact that interior-point methods are knowingly capable of exploiting block-diagonal structure (see Gondzio's review~\cite{gondzioipm} and references therein for details). It is not expected, though, that every constraint qualification will be preserved between multifold and block-diagonal representations. In particular, the constraint qualifications we define in the next section are defined by means of exploiting the multifold structure. In this context, they are strictly weaker than Robinson's CQ, while if one considers a single block-diagonal representation our condition would resume to Robinson's CQ. Furthermore, since our analysis is related to AKKT sequences, which describe the output of many practical algorithms, our results provide a stronger convergence theory for them when applied to SDP problems under multifold representation.

\if{
\begin{remark}
Another relevant difference between \eqref{sdp} and \eqref{single-sdp} is that nondegeneracy for \eqref{sdp} implies $n\geq \sum_{j=1}^\ell (m_j-r_j)(m_j-r_j+1)/2$, which constraints the dimension of the space of matrices and of the kernel of $g_j(x^*)$, for each $j=1,\ldots,\ell$.  
However, nondgeneracy for \eqref{single-sdp} implies on a possibly more severe constraint over the dimensions: $n\geq (m-r)(m-r+1)/2$, where $m=m_1+\ldots+m_\ell$ and $r=r_1+\ldots+r_\ell$.
\end{remark}
}\fi

For more details about  the nondegeneracy condition in the semidefinite programming
context, see e.g. \cite{BonRamSDP,Shapiro}. In particular, Nondegeneracy condition for multifold SDP given in Definition \ref{def:NDG}  and the discussion above are inspired from  \cite[Section 5]{BonRamSDP}.

In the next section we propose a naive RCPLD condition similar to Definition~\ref{def:RCPLD} for multifold SDP, as in \eqref{sdp}. We note that CPLD has already been used in the context of SDP problems in \cite{chineses}, however, they consider the application of an augmented Lagrangian method for a mixed problem with SDP constraints and NLP constraints, where the NLP constraints are not penalized and are carried out to the subproblems. Hence, the usual CPLD is assumed for the NLP constrained subproblems, in the context of feasibility results, while Robinson's CQ is assumed for the full problem in the context of optimality results. In particular, no CPLD-type CQ is introduced for the full problem.

\subsection{A constant rank condition for SDP}

Denote the smallest eigenvalue of a matrix $A$ by $\sigma_{\min}(A)$ and its associated unitary eigenvectors by $\nu_{\min}(A)$ and $-\nu_{\min}(A)$. It is known that $\sigma_{\min}$ is continuously differentiable at $A$ when $\sigma_{\min}(A)$ is simple, i.e., when it has algebraic multiplicity equal to one, and that $J_{\sigma_{\min}}(A)=\nu_{\min}(A)\nu_{\min}(A)^T$ in this case (see, e.g., \cite{ShapiroFan}). So, given a local minimizer $x^*$, the composition $\sigma_{\min}\circ g_j$ is a reduction mapping for the block $j$ when $\sigma_{\min}(g_j(x^*))$ is simple, playing a similar role to $\phi_j(x)$ for problem \eqref{socp-reduced}. Also, in this scenario, 
\begin{equation}\label{eq:der-sigma_min} 
\nabla (\sigma_{\min}( g_j (x))=J_{g_j}(x)^T  J_{\sigma_{\min}}(g_j(x))
\end{equation}
when $x$ is close enough to $x^*$. 
This motivates us to define an analogue of problem \eqref{socp-reduced} for SDP as follows:
\begin{eqnarray}\label{sdp-reduced}
\nonumber\mbox{Minimize} & f(x),&\\ 
\mbox{s.t.} & h(x)=0,\\
\nonumber&g_j(x)\in \S^{m_j}_+, &j\in I_N(x^*),\\
\nonumber&\sigma_{\min}(g_j(x))\geq 0, &j \in I_R(x^*),
\end{eqnarray}
where $$I_R(x^*):=\{j\in\{1,\ldots,\ell\} \mid 0=\sigma_{\min}(g_j(x^*))  \textnormal{ is simple}\}$$ and $$I_N(x^*):=\{j\in\{1,\ldots,\ell\} \mid 0=\sigma_{\min}(g_j(x^*))  \textnormal{ is not simple}\}.$$ Note that \eqref{sdp-reduced} is locally equivalent to \eqref{sdp} and that we have removed for simplicity all the constraints such that 
$g_j(x^*) \succ 0$, i.e., the ``inactive'' ones, in the reformulated problem. 
However, in problem \eqref{sdp-reduced}, we have not applied the reduction approach to blocks $j\in I_N(x^*)$.
Roughly speaking, our approach consists of defining a constraint qualification that relaxes Robinson's CQ to a constant rank-type condition, but only at the constraints indexed by $I_R(x^*)$, which are the ones that are well-behaved enough to be fully replaceable by a single real-valued constraint. As in the SOCP case, our strategy for proving that this is indeed a constraint qualification is based on sequential optimality conditions.

In~\cite{AHV}, the AKKT condition was extended for SDP. Next, we present an adapted version of it for problems with mixed NLP and SDP constraints, like \eqref{sdp-reduced}:

\begin{theorem}\label{thm:akkt-sdp}Let $x^*$ be a local minimizer of \eqref{sdp-reduced}. Then, there exist AKKT sequences $\{x^k\}\subset\R^n$, $\{\lambda^k\}\subset\R^p$, $\{\alpha^k_j\}\subset\R_+$, and $\{\mu^k_j\}\subset\S^{m_j}_+$ such that $x^k\to x^*$ and 
\begin{eqnarray}
\label{akkt-thm-sdp}
\nabla f(x^k)+ J_h(x^k)^T\lambda^k-\sum_{j\in I_N(x^*)} J_{g_j}(x^k)^T\mu_j^k-\sum_{j\in I_R(x^*)} \alpha_j^k\nabla \sigma_{\min}(g_j(x^k))\to0, \label{akkt-stat-sdp} \\
\sigma_i(g_j(x^*))>0\Rightarrow \sigma_i(\mu_j^k)\to 0, \quad  i=1,\ldots,m_j, \quad \forall j\in I_N(x^*),\label{akkt-comp-sdp} 
%\sigma_{\min}(g_j(x^*))>0\Rightarrow \alpha_j^k\to 0, \quad \forall j\in I_R(x^*),\label{akkt-comp-nlp}
\end{eqnarray}
where $\sigma_i(\mu_j^k)$ and $\sigma_i(g_j(x^*))$ denote corresponding eigenvalues of $\mu_j^k$ and $g_j(x^*)$, respectively, regarding ordered orthonormal eigenbasis $\{\nu_i(\mu_j^k)\}_{i=1}^{m_j}$ and $\{\nu_i(g_j(x^*))\}_{i=1}^{m_j}$  such that $\nu_i(\mu_j^k)\to \nu_i(g_j(x^*))$ for all $i=1,\ldots,m_j$ and all $j\in I_N(x^*)$.
\end{theorem}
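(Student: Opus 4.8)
The plan is to reduce Theorem~\ref{thm:akkt-sdp} to the AKKT condition for pure SDP proved in~\cite{AHV} by treating the real-valued constraints $\sigma_{\min}(g_j(x))\ge 0$, $j\in I_R(x^*)$, as $1\times 1$ semidefinite blocks. First I would rewrite problem~\eqref{sdp-reduced} as a single standard SDP instance: keep the blocks $g_j(x)\in\S^{m_j}_+$ for $j\in I_N(x^*)$ unchanged, and append, for each $j\in I_R(x^*)$, the scalar constraint $\sigma_{\min}(g_j(x))\in\S^1_+=\R_+$. The point here is that $\sigma_{\min}\circ g_j$ is continuously differentiable in a neighborhood of $x^*$ precisely because $0=\sigma_{\min}(g_j(x^*))$ is simple for $j\in I_R(x^*)$, with gradient given by~\eqref{eq:der-sigma_min}; so this is a legitimate smooth reformulation, locally equivalent to~\eqref{sdp} near $x^*$, and $x^*$ remains a local minimizer of it. Applying the SDP-AKKT theorem of~\cite{AHV} to this reformulated problem yields sequences $x^k\to x^*$, $\lambda^k\in\R^p$, multipliers $\mu_j^k\in\S^{m_j}_+$ for $j\in I_N(x^*)$, and scalar multipliers $\alpha_j^k\ge 0$ for $j\in I_R(x^*)$ (the $1\times 1$ positive semidefinite multipliers), such that the stationarity residual
\[
\nabla f(x^k)+J_h(x^k)^T\lambda^k-\sum_{j\in I_N(x^*)}J_{g_j}(x^k)^T\mu_j^k-\sum_{j\in I_R(x^*)}\alpha_j^k\,\nabla\sigma_{\min}(g_j(x^k))
\]
tends to $0$, which is exactly~\eqref{akkt-stat-sdp}. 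Here I would invoke~\eqref{eq:der-sigma_min} to identify the adjoint term coming from the scalar block $j\in I_R(x^*)$ with $\alpha_j^k\nabla\sigma_{\min}(g_j(x^k))$.

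The second task is to extract the complementarity-type condition~\eqref{akkt-comp-sdp} together with the eigenbasis-convergence statement. This I would obtain from the approximate complementarity that the SDP-AKKT theorem of~\cite{AHV} supplies for each block $j\in I_N(x^*)$: typically it gives $\mu_j^k\to\bar\mu_j$ (or at least controls the spectrum of $\mu_j^k$) and forces the eigenvalue directions of $\mu_j^k$ associated with strictly positive eigenvalues of $g_j(x^*)$ to vanish in the limit, while aligning the eigenbases. Concretely, I would choose, for each $k$, an ordered orthonormal eigenbasis $\{\nu_i(\mu_j^k)\}_{i=1}^{m_j}$ of $\mu_j^k$; since the sphere of orthonormal frames in $\S^{m_j}$ is compact, passing to a subsequence I may assume $\nu_i(\mu_j^k)$ converges for each $i$, and by re-ordering the (fixed) eigenbasis $\{\nu_i(g_j(x^*))\}$ of $g_j(x^*)$ accordingly I can arrange $\nu_i(\mu_j^k)\to\nu_i(g_j(x^*))$ for all $i$ and all $j\in I_N(x^*)$. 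Under this matching, the approximate complementarity of~\cite{AHV} translates directly into: $\sigma_i(g_j(x^*))>0\Rightarrow\sigma_i(\mu_j^k)\to0$, which is~\eqref{akkt-comp-sdp}. Note that for blocks $j\in I_R(x^*)$ no complementarity claim is made in the statement, so nothing further is needed there; and the subsequence extractions do not affect~\eqref{akkt-stat-sdp} since convergence is preserved along subsequences.

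The main obstacle I anticipate is purely a matter of bookkeeping between the formulation of the AKKT condition in~\cite{AHV} and the precise form stated here: one must check that the ``approximate complementarity'' in~\cite{AHV} is genuinely equivalent to the spectral statement~\eqref{akkt-comp-sdp} with simultaneously convergent eigenbases, and that the reduction of $I_R(x^*)$-blocks to $1\times 1$ SDP blocks is compatible with how~\cite{AHV} handles mixed NLP/SDP constraints (the excerpt already signals that \cite{AHV} is being \emph{adapted} to the mixed setting, so the scalar constraints may alternatively be kept as NLP inequalities and carried along with the same conclusion). A minor technical point to verify is that $\sigma_{\min}\circ g_j$ is $C^1$ on a fixed neighborhood $V$ of $x^*$ for every $j\in I_R(x^*)$ — this follows from simplicity of the zero eigenvalue at $x^*$ and continuity of eigenvalues, shrinking $V$ if necessary so that $\sigma_{\min}(g_j(x))$ stays simple on $V$ — and that the gradient formula~\eqref{eq:der-sigma_min} holds throughout $V$. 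Once these identifications are in place, the theorem follows immediately from~\cite{AHV}; essentially no new analysis is required beyond the compactness argument for the eigenframes.
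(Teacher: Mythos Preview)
The paper does not supply its own proof of Theorem~\ref{thm:akkt-sdp}; it is presented as an ``adapted version'' of the AKKT result from~\cite{AHV} for the mixed NLP/SDP problem~\eqref{sdp-reduced}, and the paper proceeds immediately to use it. Your strategy of deducing the statement from~\cite{AHV} by viewing the scalar constraints $\sigma_{\min}(g_j(x))\ge 0$, $j\in I_R(x^*)$, as $1\times 1$ semidefinite blocks (or, equivalently, as NLP inequalities carried alongside the genuine SDP blocks) is precisely the intended route, and the stationarity part~\eqref{akkt-stat-sdp} follows exactly as you describe.

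There is, however, a real gap in your derivation of the eigenbasis-convergence part of~\eqref{akkt-comp-sdp}. Compactness of the orthogonal group lets you pass to a subsequence along which each $\nu_i(\mu_j^k)$ converges, but the limiting orthonormal frame need not be an eigenbasis of $g_j(x^*)$ at all; ``re-ordering'' the eigenbasis of $g_j(x^*)$ cannot repair this, because the limit frame may simply fail to diagonalize $g_j(x^*)$. What actually forces the eigenbases to align is the specific construction of $\mu_j^k$ in~\cite{AHV}: there $\mu_j^k$ is obtained (up to a scalar) as the projection onto $\S^{m_j}_+$ of a matrix that commutes with $g_j(x^k)$, so $\mu_j^k$ and $g_j(x^k)$ are simultaneously diagonalizable for every $k$. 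Choosing a common eigenbasis of $\mu_j^k$ and $g_j(x^k)$ and then applying compactness to that frame gives a limit that \emph{is} an eigenframe of $g_j(x^*)$, and the spectral complementarity $\sigma_i(g_j(x^*))>0\Rightarrow\sigma_i(\mu_j^k)\to 0$ drops out of the same construction. Your proposal correctly flags this step as the one needing verification against~\cite{AHV}, but the concrete argument you offer (pure compactness plus relabeling) is not sufficient; you must invoke the simultaneous diagonalizability coming from the penalty/projection construction.
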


With this result at hand, we proceed in a similar manner to Definition~\ref{def:RCPLD} in order to extend the \emph{Relaxed Constant Positive Linear Dependence} (RCPLD) condition to SDP via problem \eqref{sdp-reduced}.

\begin{definition}%[RCPLD for SDP]
\label{def:RCPLD-sdp}
Let $x^*$ be feasible for problem \eqref{sdp} and let $I\subseteq\{1,\ldots,p\}$ be such that $\{\nabla h_i(x^*)\}_{i\in I}$ is a basis for the space spanned by $\{\nabla h_i(x^*)\}_{i=1}^p$. We say that \emph{Relaxed Constant Positive Linear Dependence}  holds at $x^*$ when, for every $J\subseteq I_R(x^*)$, there exists a neighborhood $V$ of $x^*$ such that:
\begin{itemize}
\item $\{\nabla h_i(x)\}_{i=1}^{p}$ has constant rank for all $x\in V$;
\item If the system
$$
\begin{aligned}
J_h(x^*)^T\lambda + \sum_{j\in I_N(x^*)} J_{g_j}(x^*)^T \mu_j + \sum_{j\in J}  \alpha_j \nabla \sigma_{\min}(g_j(x^*))=0,\\
\lambda\in \R^p, \quad \quad \mu_j\succeq 0, \ \forall j\in I_N(x^*), \quad \quad \alpha_j\geqslant 0, \ \forall j\in J
\end{aligned}
$$
has a nontrivial solution, then $\{\nabla h_i(x)\}_{i\in I}\cup\{\nabla \sigma_{\min}(g_j(x))\}_{j\in J}$ is linearly dependent for every $x\in V$.
\end{itemize}
%where $$ v_{11}^j(x):=\left((\nu_{\min}(g_j(x)))^T\partial_i g_j(x^*)\nu_{\min}(g_j(x))\right)_{i=1}^{n},\quad  \forall\, j\in I_R(x^*),$$ and $\nu_{\min}(g_j(x))$ is the eigenvector associated with $\sigma_{\min}(g_j(x))$.
\end{definition}

Next, we show that RCPLD is a constraint qualification using AKKT sequences (Theorem \ref{thm:akkt-sdp}).

\begin{theorem}\label{sdp:mincpldkkt}
Let $x^*$ be a feasible point of problem \eqref{sdp} satisfying  the AKKT condition~\eqref{akkt-thm-sdp} and RCPLD stated in Definition \ref{def:RCPLD-sdp}. Then, the KKT conditions \eqref{eq:KKT_P} hold at $x^*$. In particular, RCPLD is a constraint qualification. % for the SDP problem.
%If $x^*$ is a local minimizer of an SDP problem that satisfies RCPLD, then $x^*$ also satisfies the KKT conditions.
\end{theorem}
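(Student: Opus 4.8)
The plan is to mirror the proof of Theorem~\ref{thm:RCPLD} for the SOCP case, now using the SDP AKKT condition from Theorem~\ref{thm:akkt-sdp} together with the reduction mapping $\sigma_{\min}\circ g_j$ for the blocks $j\in I_R(x^*)$. First I would invoke Theorem~\ref{thm:akkt-sdp} to obtain sequences $x^k\to x^*$, $\lambda^k\in\R^p$, $\mu_j^k\succeq0$ for $j\in I_N(x^*)$, and $\alpha_j^k\geq0$ for $j\in I_R(x^*)$ satisfying the stationarity relation \eqref{akkt-stat-sdp} and the complementarity-type relation \eqref{akkt-comp-sdp}. Using the constant rank of $\{\nabla h_i(x)\}_{i=1}^p$ near $x^*$ and the definition of $I$, I would rewrite $\sum_{i=1}^p\lambda_i^k\nabla h_i(x^k)=\sum_{i\in I}\tilde\lambda_i^k\nabla h_i(x^k)$ with $\{\nabla h_i(x^k)\}_{i\in I}$ linearly independent for each $k$ (shrinking $V$ if necessary so this holds on $V$).

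Next I would apply Carath\'eodory's Lemma (Lemma~\ref{lemma:carath}), for each $k$, to the finite collection $\{\nabla h_i(x^k)\}_{i\in I}\cup\{\nabla\sigma_{\min}(g_j(x^k))\}_{j\in I_R(x^*)}$, with the $\{\nabla h_i(x^k)\}_{i\in I}$ as the prescribed independent subset, to extract $J^k\subseteq I_R(x^*)$ and new coefficients $\hat\lambda_i^k$, $\hat\alpha_j^k\geq0$ (signs preserved on the $J^k$ part) so that $\{\nabla h_i(x^k)\}_{i\in I}\cup\{\nabla\sigma_{\min}(g_j(x^k))\}_{j\in J^k}$ is linearly independent and \eqref{akkt-stat-sdp} still holds with $I_R(x^*)$ replaced by $J^k$. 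By the infinite pigeonhole principle, pass to a subsequence on which $J^k\equiv J$ is constant. Now set $M^k:=\max\{|\hat\lambda_i^k|\,(i\in I);\ \|\mu_j^k\|\,(j\in I_N(x^*));\ \hat\alpha_j^k\,(j\in J)\}$. If $\{M^k\}$ is bounded, take a convergent subsequence; the limit multipliers (padded with zeros on indices outside $I$ and on the reduced constraints outside $J$, and noting that the limit $\mu_j$ stays in $\S^{m_j}_+$ since that cone is closed) satisfy the stationarity equation, and using \eqref{akkt-comp-sdp} together with the simplicity of $\sigma_{\min}(g_j(x^*))$ on $I_R(x^*)$ I recover a genuine KKT multiplier for \eqref{sdp}; the reduced-constraint multipliers $\alpha_j$ are converted back to matrix multipliers $\alpha_j\,\nu_{\min}(g_j(x^*))\nu_{\min}(g_j(x^*))^T\succeq0$, which automatically complement with $g_j(x^*)$. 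If $\{M^k\}\to+\infty$, divide \eqref{akkt-stat-sdp} (in its $J$-reduced form) by $M^k$ and pass to a limit to get a nontrivial solution $(\lambda_i)_{i\in I},(\mu_j)_{j\in I_N(x^*)},(\alpha_j)_{j\in J}$ of the homogeneous system in Definition~\ref{def:RCPLD-sdp}, with $\mu_j\succeq0$ and $\alpha_j\geq0$; RCPLD then forces $\{\nabla h_i(x)\}_{i\in I}\cup\{\nabla\sigma_{\min}(g_j(x))\}_{j\in J}$ to be linearly dependent near $x^*$, contradicting the linear independence produced by Carath\'eodory at the points $x^k\to x^*$.

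The step I expect to be the main obstacle is the bookkeeping around the complementarity condition \eqref{akkt-comp-sdp} in the bounded-$M^k$ case: unlike the SOCP setting, where the reduction is via the smooth $\phi_j$ and $\mu_j$ is reconstructed algebraically from \eqref{mu_complem}, here I must argue that the limiting matrix multipliers $\mu_j$ for $j\in I_N(x^*)$ genuinely satisfy $g_j(x^*)\cdot\mu_j=0$. This requires choosing the eigenbases as in Theorem~\ref{thm:akkt-sdp} so that $\nu_i(\mu_j^k)\to\nu_i(g_j(x^*))$ and passing \eqref{akkt-comp-sdp} to the limit to conclude $\sigma_i(\mu_j)=0$ whenever $\sigma_i(g_j(x^*))>0$, hence $\mathrm{range}(\mu_j)\subseteq\ker(g_j(x^*))$ and $g_j(x^*)\cdot\mu_j=0$; closedness of $\S^{m_j}_+$ gives $\mu_j\succeq0$. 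A secondary technical point is ensuring that $\nabla\sigma_{\min}(g_j(x))$ is well-defined and continuous on a neighborhood of $x^*$ for $j\in I_R(x^*)$, which follows from simplicity of the zero eigenvalue there and formula \eqref{eq:der-sigma_min}; this is exactly what legitimizes applying Carath\'eodory at $x^k$ and reading off linear (in)dependence at nearby points. Everything else is a direct transcription of the SOCP argument, so I would keep the write-up short and refer back to the proof of Theorem~\ref{thm:RCPLD} for the shared steps.
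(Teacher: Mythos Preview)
Your proposal is correct and follows essentially the same approach as the paper's own proof: rewrite the equality block using the constant-rank assumption and the basis $I$, apply Carath\'eodory's Lemma to the reduced constraints in $I_R(x^*)$, fix $J$ by pigeonhole, and run the bounded/unbounded dichotomy on $M^k$, contradicting RCPLD in the unbounded case and reconstructing matrix multipliers $\bar\alpha_j\,\nu_{\min}(g_j(x^*))\nu_{\min}(g_j(x^*))^T$ in the bounded case. Your more careful treatment of the complementarity step for $j\in I_N(x^*)$ via the eigenbasis convergence in \eqref{akkt-comp-sdp} is in fact more explicit than the paper's, which simply asserts that \eqref{akkt-comp-sdp} yields $g_j(x^*)\cdot\bar\mu_j=0$.
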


\begin{proof}
%Since $\sigma_{\min}(g_j(x^*))$ is simple for every $j\in I_R(x^*)$ we have, for every $x$ near $x^*$,
%$$
%\begin{array}{ll}
%\nabla\sigma_{\min}(g_j(x)) & = \left(\langle \nu_{\min}(g_j(x))\nu_{\min}(g_j(x))^T , \partial_i g_j(x)\rangle\right)_{i=1}^{n}\\
%& = \left(\nu_{\min}(g_j(x))^T  \partial_i g_j(x)\nu_{\min}(g_j(x))\right)_{i=1}^{n}\\
%& = v_{11}^j(x),
%\end{array}
%$$ for all $j\in I_R(x^*)$. We can assume this holds for all $x\in V$ without loss of generality. 
%
%By Theorem~\ref{thm:akkt-sdp} together with the optimality of $x^*$, there are sequences $\{x^k\}\to x^*$, $\{\lambda^k\}\subset \R^p$, $\{\alpha^k_j\}\subset \R_+$, and $\{\mu^k_j\}\subset \S^{m_j}_+$ such that \eqref{akkt-stat-sdp} and \eqref{akkt-comp-sdp} hold. We will prove the stronger result that any point admitting the existence of such sequences, under RCPLD, satisfies the KKT conditions. 
%
Let  $\{x^k\}\to x^*$, $\{\lambda^k\}\subset \R^p$, $\{\alpha^k_j\}\subset \R_+$, and $\{\mu^k_j\}\subset \S^{m_j}_+$ be sequences such that \eqref{akkt-stat-sdp} and \eqref{akkt-comp-sdp} hold.
By the constant rank assumption and the definition of $I$, the set $\{\nabla h_i(x^k)\}_{i\in I}$ is a basis for the space spanned by $\{\nabla h_i(x^k)\}_{i=1}^p$ when $k$ is large enough. Hence, for all such $k$, there are new scalars $\tilde{\lambda}^k\in \R^{|I|}$ such that 
$$\sum_{i=1}^p \lambda^k_i \nabla h_i(x^k) = \sum_{i\in I} \tilde\lambda^k_i \nabla h_i(x^k),$$
for all $k$. Set $\tilde\lambda^k_i=0$ for all $i\not\in I$. So, $J_h(x^k)^T \lambda^k=J_h(x^k)^T\tilde \lambda^k$ for all $k$.

Also, thanks to Carath\'eodory's Lemma (Lemma~\ref{lemma:carath}) in \eqref{akkt-stat-sdp}, for every fixed $k$ there is a nonempty subset $J^k\subset I_R(x^*)$ such that $\{\nabla h_i(x^k)\}_{i\in I}\bigcup\{\nabla \sigma_{\min}(g_j(x^k))\}_{j\in J^k}$ is linearly independent and, consequently, \eqref{akkt-stat-sdp} can be rewritten as follows
\begin{equation} \label{akkt-stat-sdp2} 
\nabla f(x^k)+ J_h(x^k)^T\tilde \lambda^k-\sum_{j\in I_N(x^*)} J_{g_j}(x^k)^T\mu_j^k-\sum_{j\in  J^k} \tilde{\alpha}_j^k \nabla \sigma_{\min}(g_j(x^k))\to0,
\end{equation}
for some $\tilde{\alpha}^k_j\geqslant 0$, where $j\in J^k$. Note that in this process the scalars $\tilde\lambda_i^k, i\in I$, also changes, but we abuse the notation by still denoting them by $\tilde\lambda_i^k$. Now, by the infinite pigeonhole principle, we can assume, without loss of generality, that $J^k=J$, for all $k\in \N$. That is, we can take a subsequence if necessary such that $J^k$ does not vary with  $k$. 

 Now, we claim that the sequences $\{\tilde{\lambda}^k\}$,  $\{\mu_j^k\}$, $j\in I_N(x^*)$, and $\{\tilde{\alpha}_j^k\}$, $j\in J$ are bounded. Indeed, set $$M_k:= \max\{\tilde\alpha_j^k, j\in J; \|\mu_j^k\|, j\in I_N(x^*); \|\tilde{\lambda}^k\|\}$$ and suppose that $\{M_k\}$ is unbounded. This implies, by passing to a subsequence if necessary, that 
 \begin{eqnarray*}
  & & -\frac{\tilde{\lambda}_i^k}{M^k}\to\lambda_i\in\R, \: i\in I; \qquad \frac{\mu_j^k}{M^k}\to\mu_j\in K_{m_j}, \: j\in I_N(x^*); \\
  & & \frac{\tilde{\alpha}_j^k}{M^k}\to\alpha_j\geq0, \: j\in J, \qquad \mbox{ with }(\lambda_i,\mu_j,\alpha_j)\neq0.
\end{eqnarray*}
 Then, by dividing \eqref{akkt-stat-sdp} by $M_k$ and passing to the limit, we contradict RCPLD.

Finally, let $\bar{\mu}_j\in \S^{m_j}_+$ ($j\in I_N(x^*)$), $\bar{\alpha}_j\geqslant 0$ ($j\in I_R(x^*)$), and $\bar{\lambda}$, be limit points of the sequences $\{\mu_j^k\}$ ($j\in I_N(x^*)$), $\{\tilde{\alpha}_j^k\}$ ($j\in I_R(x^*)$), and $\{\tilde{\lambda}^k\}$, respectively. Note that these limit points are Lagrange multipliers associated with $x^*$. Indeed, 
%set $\bar{\lambda}_j=0$ when $j\in \{1,\dots,p\}\setminus I$ as an abuse of notation. Moreover, 
by definition of $I_R(x^*)$, we always have $\sigma_{\min}(g_j(x^*))\bar{\alpha}_j=0$, for all $j\in I_R(x^*)$. So, for each $j\in I_R(x^*)$ the matrix $\bar{\mu}_j := \bar{\alpha}_j \nu_{\min}(g_j(x^*))\nu_{\min}(g_j(x^*))^T$ is positive semidefinite and satisfies that $J_{g_j}(x^*)^T\bar{\mu}_j= \bar{\alpha}_j^k \nabla \sigma_{\min}(g_j(x^k))$
(cf. \eqref{eq:der-sigma_min}).
%that has $\bar{\alpha}_j$ in its $(1,1)$ position and zeroes elsewhere and obtain $J_{g_j}(x^*)^T\bar{\mu}_j=\sigma_{\min}(g_j(x^*))\bar{\alpha}_j$. 
%
Additionally, set $\bar\mu_j:=0$ when $j$ is such that $g_j(x^*) \succ 0$.
Then, it follows from \eqref{akkt-stat-sdp} that 
 $$\nabla f(x^*)+ J_h(x^*)^T\bar \lambda- \sum_{j=1}^\ell J_{g_j}(x^*)^T\bar{\mu}_j=0,$$
 %that $g_j(x^*)\cdot \bar{\mu}_j=0$ for all $j\in I_N(x^*)$ and thus we have 
 which together with \eqref{akkt-comp-sdp} implies that $g_j(x^*)\cdot \bar{\mu}_j=0$ for every $j$.  %\in I_N(x^*)\cup I_R(x^*)$. 
 The desired result follows.
 %We thus conclude that KKT conditions \eqref{eq:KKT_P}  are fulfilled.
\end{proof}

The CRSC condition can also be extended in a very similar manner. That is, we treat the conic constraints that ``look like equality constraints'' near the feasible point $x^*$, as equality constraints, which means it is not necessary to consider the rank-type structure of every subset of their gradients, but only of one fixed set. To formalize our analyses, we define the set
\begin{equation}\label{sdp:Jminus}
\begin{split}J_{-}(x^*):=\Bigg\{j_0\in I_R(x^*) {\Big |} -\nabla\sigma_{min}(g_{j_0}(x^*))=\sum_{i=1}^p\lambda_i\nabla h_i(x^*) + \hspace{-5pt}\sum_{j\in I_R(x^*)}\hspace{-5pt}\alpha_j\nabla\sigma_{\min}(g_j(x^*)),\\
\mbox{ for some } \lambda_i\in\R, \alpha_j\geq0 \Bigg\},
\end{split}
\end{equation}
and the set $J_+(x^*):=I_R(x^*)\setminus J_-(x^*)$. Now, the \emph{Constant Rank of the Subspace Component} (CRSC) constraint qualification for SDP is defined as follows:

\begin{definition}
Let $x^*$ be a feasible point of \eqref{socp} and $J_{-}(x^*)\subseteq I_R(x^*)$ be defined as in~\eqref{sdp:Jminus}. We also take $I\subseteq\{1,\dots,p\}$ and $J\subseteq J_{-}(x^*)$ such that $\{\nabla h_i(x^*)\}_{i\in I}\cup\{\nabla\sigma_{\min}(g_j(x^*))\}_{j\in J}$ is a basis of the space spanned by the set $\{\nabla h_i(x^*)\}_{i=1}^p\cup\{\nabla\sigma_{\min}(g_j(x^*))\}_{j\in J_{-}(x^*)}$. We say that \emph{Constant Rank of the Subspace Component} (CRSC)  condition holds at $x^*$ when there exists a neighborhood $V$ of $x^*$ such that:
\begin{itemize} 
\item $\{\nabla h_i(x)\}_{i=1}^p\cup\{\nabla\sigma_{\min}(g_j(x))\}_{j\in J_{-}(x^*)}$ has constant rank for all $x$ in $V$;
\item the system
\begin{eqnarray*}\sum_{i\in I}\lambda_i\nabla h_i(x^*)+\sum_{j\in I_N(x^*)}J_{g_j}(x^*)^T\mu_j+\sum_{j\in J\cup J_{+}(x^*)}\alpha_j\nabla\sigma_{\min}(g_j(x^*))=0,\\
\lambda_i\in\R, i\in I; \quad \mu_j\in \S^{m_j}_+, j\in I_N(x^*); \quad \alpha_j\in\R, j\in J; \quad \alpha_j\geq0, j\in J_{+}(x^*),
\end{eqnarray*}
has only the trivial solution.% $\lambda_i=0, i\in I; \mu_j=0, j\in I_N(x^*); \alpha_j=0, j\in J\cup J_+(x^*)$.%, then $\{\nabla h_i(x)\}_{i\in I}\cup\{\nabla\phi_j(x)\}_{j\in J\cup J_{+}}$ is linearly dependent for all $x$ in $V$.
\end{itemize}
\end{definition}

It is possible to prove that CRSC is indeed a constraint qualification, but since the proof follows from the same arguments  provided in the proof of Theorem~\ref{sdp:mincpldkkt}, it is omitted. The next counterexample,  analogous to Example~\ref{ex:socp}, shows that CRSC and RCPLD are strictly weaker than Robinson's CQ. 

\begin{example}\label{ex:sdp}
Consider the following pair of constraints: 
$$
g_1(x):= \half
\begin{bmatrix}
x+1 & x-1\\
x-1 & x+1
\end{bmatrix}
\in \S^2_+,
\quad
g_2(x):= \half \begin{bmatrix}
1-x & -x-1\\
-x-1 & 1-x
\end{bmatrix}
\in \S^2_+
$$
and the point $x^* = 0$, which is the unique feasible point. The eigenvalues of $g_1(x)$ are $\sigma_{\min}(g_1(x))=x$ and $\sigma_{\max}(g_1(x))=1$, with corresponding eigenvectors $\nu_{\min}(g_1(x))=(1,1)^T$ and $\nu_{\max}(g_1(x))=(1,-1)^T$, respectively, for all $x$ close to $x^*$. With the same eigenvectors, the eigenvalues of $g_2(x)$ are $\sigma_{\min}(g_2(x))=-x$ and $\sigma_{\max}(g_2(x))=1$, when $x$ is close to $x^*$. 

Also, note that $\sigma_{\min}(g_1(x^*))$ and $\sigma_{\min}(g_2(x^*))$ are both simple, which means the reformulation of the problem as in~\eqref{sdp-reduced} is simply an NLP problem. Moreover, we have that $\nabla \sigma_{\min}(g_1(x))=1$, $\nabla  \sigma_{\min}(g_2(x))=-1$, for all $x$ close enough to $x^*=0$. Then, RCPLD and CRSC (with $J_-(x^*)=\{1,2\}$ and, consequently, $J_+(x^*)=\emptyset$ and $J$ equals either $\{1\}$ or $\{2\}$) hold. However, Robinson's CQ does not hold. Thus, RCPLD and CRSC are strictly implied by Robinson's CQ.
\end{example}

\section{Conclusion} 

We have presented naive definitions of constant rank-type CQs for second-order cone programming and semidefinite programming. The definition is naive in the sense that no improvement is made with respect to irreducible constraints, where our definitions resume to Robinson's CQ. However, in general, our definitions are strictly weaker than Robinson's CQ. In order to present a definition that takes into account the true conic constraints, we expect that a much more involving implicit function approach or Approximate-KKT approach would be needed, which is a subject of current research. Note that, since augmented Lagrangian algorithms described in \cite{psocp} and \cite{AHV} generate an AKKT sequence for SOCP \eqref{socp} and  SDP  \eqref{sdp} problems, respectively,  CQs introduced in these notes are sufficient for showing global convergence to a KKT point without assuming Robinson's CQ.\\

%%%% ACKNOWLEDGE

\si{\section*{Acknowledgement} \FINANCIAMENTO}

\jo{\section*{Conflict of interest}
The authors declare that they have no conflict of interest.}

%%%%%%%%%%%%%%%%%%%%%%%%%%%%%%%%%%%%%%%%%%%%%%%%%%%%%%
%
%					bibliography
%
%%%%%%%%%%%%%%%%%%%%%%%%%%%%%%%%%%%%%%%%%%%%%%%%%%%%%%

\si{\bibliographystyle{plain}}
\jo{\bibliographystyle{spmpsci}}
\bibliography{biblio}

\end{document}